\definecolor{ForestGreen}{rgb}{0.15,0.416,0.18}
\definecolor{EgyptBlue}{rgb}{0.063,0.2,0.65}
\newtheorem{theorem}{Theorem}
\newtheorem{lemma}[theorem]{Lemma}
\newtheorem{proposition}[theorem]{Proposition}
\theoremstyle{definition}
\newtheorem{definition}[theorem]{Definition}
\newtheorem{remark}[theorem]{Remark}
\numberwithin{theorem}{section}
\numberwithin{equation}{section}
\numberwithin{figure}{section}
\newcommand{\N}{\mathbb{N}}
\newcommand{\R}{\mathbb{R}}
\newcommand{\K}{\mathbb{K}}
\newcommand{\A}{\mathcal{A}}
\newcommand{\cC}{\mathcal{C}}
\newcommand{\cR}{\mathcal{R}}
\newcommand{\cS}{\mathcal{S}}
\DeclareMathOperator{\dist}{dist}
\DeclareMathOperator{\dom}{dom}
\DeclareMathOperator{\id}{id}
\DeclareMathOperator{\signc}{sc}
\DeclareMathOperator{\lip}{Lip}
\newcommand{\intd}{\ensuremath{\,\mathrm{d}}}
\newcommand{\CK}{\mathcal{C}_{\mathbb{K}}}
\newcommand{\CKL}{\mathcal{C}_{\mathbb{K},L_0}}
\newcommand{\eps}{\ensuremath{\varepsilon}}
\newcommand{\subjclass}[2][2020]{%
  \let\@oldtitle\@title%
  \gdef\@title{\@oldtitle\footnotetext{#1 \emph{MSC Classification:} #2}}%
}
\newcommand{\keywords}[1]{%
  \let\@@oldtitle\@title%
  \gdef\@title{\@@oldtitle\footnotetext{\emph{Keywords:} #1}}%
}
\title{Discrete Lyapunov functional for cyclic systems of differential equations with time-variable or state-dependent delay}
\keywords{delay differential equation, state-dependent delay, time-variable delay, discrete Lyapunov functional, global attractor, pullback attractor}
\subjclass{34K43, 34K06, 34K11, 34K25, 37C60}
\author[1,2,3]{István Balázs}
\author[1,2,3]{Ábel Garab\footnote{Corresponding author: garab@math.u-szeged.hu.}}
\affil[1]{Bolyai Institute, University of Szeged,\par Aradi vértanúk tere 1, Szeged, H–6720, Hungary}
\affil[2]{HUN-REN--SZTE Analysis and Applications Research Group, \par Bolyai Institute,
University of Szeged}
\affil[3]{National Laboratory for Health Security, University of Szeged, Szeged, Hungary}
\date{}
\begin{document}
	\maketitle
	
\begin{abstract}
\noindent We consider nonautonomous cyclic systems of delay differential equations with variable delay.  Under suitable feedback assumptions, we define an (integer valued) Lyapunov functional related to the number of sign changes of the coordinate functions of solutions. We prove that this functional possesses properties analogous to those established by Mallet-Paret and Sell for the constant delay case and by Krisztin and Arino for the scalar case. We also apply the results to equations with state-dependent delays.
\end{abstract}

\section{Introduction}
Cyclic systems of delay differential equations (DDEs) arise in a variety of models in life sciences \cite{goodwin:65,hastings:tyson:webster:77,mahaffy:80,scheper:et-al:99,wang:jing:chen:05,xiao:cao:08, li:wei:05, chu:liu:magal:ruan:16,ducrot:nadin:14}, computer science \cite{baldi:atiya:94,garab:krisztin:11,guo:huang:07,hsu_yang^3:09,yi:chen:wu:10}, economics \cite{matsumoto:szidarovszky:10} or even in traffic control and automated control \cite{bartha:krisztin:18,avedisov:orosz:15,walther:03-position-control,walther:07}. They also appear in the analysis of traveling wave solutions of some population dynamical models \cite{ducrot:11,ducrot:nadin:14}.

The seminal papers by  Mallet-Paret and Sell \cite{mallet-paret:sell:96-lyapunov, mallet-paret:sell:96-poincare} considered cyclic DDEs of the normalized form
\begin{equation}
\begin{alignedat}{3}
    \dot x^0(t)&=f^0(t,x^0(t),x^1(t)),&\qquad &\\
    \dot x^{i}(t)&=f^{i}(t,x^{i-1}(t),x^i(t),x^{i+1}(t)),&\qquad &i=1,\dots,N-1,\\
    \dot x^N(t)&=f^N(t,x^{N-1}(t),x^N(t),x^0(t-1))&\qquad &
\end{alignedat}\label{eq:x_bidir}
\end{equation}
with the main hypothesis being a feedback assumption in the last component of each function $f^i$ ($0\leq i\leq N$). In \cite{mallet-paret:sell:96-lyapunov}, they developed a discrete Lyapunov functional, acting on the appropriate infinite-dimensional phase space, whose value is based on the number of sign changes on segments of solutions. Moreover, they also established some basic properties of this functional and proved that backwards-bounded solutions have finitely many sign changes on any finite interval.

These results generalized those on similar ODE systems due to Mallet-Paret and Smith \cite{mallet-paret:smith:90} and on scalar DDEs ($N=0$ case) by Mallet-Paret \cite{mallet-paret:88-morse}. Some years later, Krisztin and Arino \cite{krisztin:arino:01} proved analogous results for scalar DDEs with time-variable or state-dependent delay (SDDEs). We note that the basic concept goes back to  My\v{s}kis \cite{myshkis:51}, in the scalar case, and to Smillie  \cite{smillie:84} and Mallet-Paret and Smith \cite{mallet-paret:smith:90}, in case of systems without delays. Similar ideas (using the so-called Matano function \cite{matano:82}) were also fruitfully applied for scalar reaction-diffusion equations by Brunovsk\'{y} and Fiedler \cite{brunovsky:fiedler:86}, and for difference equations by Mallet-Paret and Sell \cite{mallet-paret:sell:03} and by one of us and Pötzsche \cite{garab:poetzsche:19}.

The discrete Lyapunov functional with its properties have proven to be a game changer in the analysis of the global attractor of infinite-dimensional dynamical systems, and in particular of delay differential equations. Without attempting to be exhaustive or chronological, they have been used to obtain: Poincaré--Bendixson-type results for cyclic unidirectional DDEs \cite{mallet-paret:sell:96-poincare}, scalar SDDEs \cite{krisztin:arino:01, kennedy:19}, and even for reaction-diffusion equations \cite{fiedler:mallet-paret:89}; results on the absence/existence and stability of periodic solutions with prescribed oscillation frequency \cite{yi:chen:wu:10,garab:krisztin:11,garab:13,fiedler:nieto:et-al:20,krisztin:vas:11,krisztin:vas:16,ivanov:lani-wayda:20,ivanov:lani-wayda:04,lani-wayda:ivanov:arxiv:24, zhang:guo:17}; detailed description of the global attractor \cite{krisztin:walther:wu:99,krisztin:vas:11,krisztin:vas:16}; absence of superexponential solutions \cite{garab:20,cao:90,cao:92}; existence of Morse decomposition of the global attractor \cite{mallet-paret:88-morse,polner:02,garab:poetzsche:19,garab:20,bartha:garab:krisztin:25}. 

\medskip

Inspired by the broad utility of this technical tool demonstrated above, in this paper we aim to generalize the results of \cite{krisztin:arino:01, mallet-paret:sell:96-lyapunov} on existence and properties (including finiteness) of the discrete Lyapunov functional for cyclic DDEs for the case of time-variable delay, with the simplification that we only consider unidirectional systems (i.e.\ the $i$-th equation is independent of $x^{i-1}$).  We also apply our results to two types of cyclic SDDEs that arise in many applications, namely DDEs with threshold-type delays and another delay implicitly defined by two values.

\medskip    
Let us consider the unidirectional cyclic system of DDEs of the form
	\begin{align}
		\begin{alignedat}{3}
			\dot x^{i}(t)&=f^{i}(t,x^i(t),x^{i+1}(t)),&\qquad& i\in \{0,\dots,N-1\},\\
			\dot x^N(t)&=f^N(t,x^N(t),x^0(\eta(t))) & \qquad&
		\end{alignedat}\label{eq:x_unidir}
	\end{align}
under the standing assumptions that  
	
\begin{enumerate}[label=(H$_\arabic*$)]
		\item \label{f-cont} the functions $f^0,\dots,f^N$ are continuous in their first argument, and $C^1$ in their other arguments;
		\item \label{f-feedback} $f^0,\dots,f^N$ obey the feedback conditions
		\begin{align*}
				vf^i(t,0,v)&>0
             & D_3 f^i(t,0,0)&>0,&i\in \{0,\dots, N-1\},\\
			\delta vf^N(t,0,v)&>0
               &\delta D_3 f^N(t,0,0)&>0&
		\end{align*}
		for all $t$, where $\delta\in\{\pm1\}$ and $D_j$ stands for differentiation w.r.t.\ the $j$-th argument;
\item\label{delay-prop} $\eta$ is continuous and strictly increasing, and the time-variable delay $\tau(t)\coloneq t-\eta(t)$ is strictly positive and bounded by some positive constant $r$ for all $t\in\R$.
\end{enumerate}

The case $N=0$ is also allowed. Then we have $\dot x(t)=f(x(t),x(\eta(t))$ with $x\coloneqq x^0$ and $f\coloneqq f^0$.

Our study is also highly motivated by the fact that any fixed solution of an SDDE can be viewed as a solution of the same DDE with time-variable delay. This observation allows one to apply the results for SDDEs as well, provided that the time-variable delay generated by the solutions satisfies hypothesis \ref{delay-prop}. Moreover, some second- and higher-order DDEs (e.g.\ $\ddot x(t)=f(t,x(\eta(t))$) may also be brought into the form of \eqref{eq:x_unidir} (or \eqref{eq:x_bidir} with possibly time-variable or state-dependent delay), see e.g.\ \cite{koplatadze:kvinikadze:stavroulakis:00,walther:07,braverman:domoshnitsky:stavroulakis:24}. Such equations are close to real-world applications, since control systems in mechanical processes often lead to mathematical models with second-order DDEs.

The remainder of the paper is organized as follows. In the next preparatory section, we set our notation, define solutions, introduce our main tool and topic of this work, the mapping $V$, study its continuity properties, and finally, introduce a related nonautonomous linear DDE. \cref{section:main-results} contains our main results. We establish fundamental properties of $V$ related to monotonicity (\cref{thm:V-decreasing}), strict monotonicity (\cref{thm:V-dropping}), constancy and continuity (\cref{thm:solutions-regularize}) and finiteness (\cref{th:x_0,thm:V-is-finite-on-A}). In \cref{sec:SDDEs} we give examples of two rather general types of SDDE systems for which our results can be applied. In the final section, we discuss some possible generalizations and related open questions.


	
	
	\section{Preliminaries}
	In this section we introduce our basic notation and define our  functions $V^\pm$ whose properties will be discussed in \cref{section:main-results}, consisting in our main results. Here we only study its basic continuity properties. Then we introduce a linear subclass of equation \eqref{eq:x_unidir} and show that, for our purposes, it suffices to restrict our study to this special class.

    \subsection{Notation, phase space, solutions}
	Denote $\K=[-r,0]\cup\{1,\dots N\}$, $\cC_\K=C(\K,\R)$, and
	$$\cC_\K^1=\left\{\varphi\in \cC_\K:\text{ the restriction }\varphi|_{[-r,0]}\text{ is continuously differentiable}\right\}$$
	with norms $\|\cdot\|$ and $\|\cdot\|_1$, respectively, where
	\begin{align*}
		\|\varphi\|&=\sup_{s\in\K}|\varphi(s)|\quad\text{for }\varphi\in\cC_\K\text{, and}\\
		\|\varphi\|_1&=\|\varphi\|+\sup_{s\in[-r,0]}|\dot\varphi(s)|\quad\text{for }\varphi\in\cC_\K^1.
	\end{align*}
	
	If $x=(x^0,\dots,x^N)\colon\R\to\R^{N+1}$ is a continuous function and $t\in\R$, then define the segment $x_t\in\cC_\K$ with
	\begin{equation}\label{eq:segments}
	    x_t(s)=\begin{cases}
		x^0(t+s),&\text{if }s\in[-r,0],\\
		x^s(t),&\text{if }s\in\{1,\dots,N\}.
	\end{cases}
	\end{equation}
	Let $t_0\in\R$, $\omega\in(t_0,\infty]$, $\varphi\in\cC_\K$, $x^0\colon [t_0-r,\omega)\to\R$ be continuous on $[t_0-r,t_0]$ and continuously differentiable on $[t_0,\omega)$, and $x^i\colon [t_0,\omega)\to \R$ be continuously differentiable for $1\le i\le N$. We call $x=(x^0,\dots,x^N)$ a \textit{solution of \eqref{eq:x_unidir} on $[t_0,\omega)$ with initial function $\varphi$ at $t_0$} if \eqref{eq:x_unidir} holds for all $t\in(t_0,\omega)$, and $x_{t_0}=\varphi$. We also occasionally use the notation $x(t)=(x^0(t),\dots,x^N(t))\in \R^{N+1}$ and $|u|=\max_{i\in\{0,\dots,N\}}|u^i|$ for all $u\in\R^{N+1}$.
	
	Let $x^i\colon\R\to\R$ be continuously differentiable, $0\le i\le N$. We call $x=(x^0,\dots,x^N)$ an \textit{entire solution of \eqref{eq:x_unidir}} if it satisfies \eqref{eq:x_unidir} for all $t\in\R$.

    For convenience, if $x$ is a solution of an equation of form \eqref{eq:x_unidir}, we also use the notations
    \begin{equation}\label{N+1-notation}
        x_t(N+1)\coloneqq x^{N+1}(t)\coloneqq  x^0(\eta(t)).
    \end{equation}

    To avoid any ambiguity, we stress here that for solutions of various equations the lower index (such as $x_t$) is solely used to indicate solution segments as defined above. However, subscripts are also used to denote the sequences. On the other hand, superscripts are used, apart from their usual meaning for powers of numbers, to indicate the coordinates of vector functions (see equation \eqref{eq:x_unidir}). It is worth noting here that for a positive integer $k$, we use the notation $\eta^k$ to denote the $k$-th iterate of the function $\eta$ of delayed time. For constant delay $\tau(t)\equiv \tau$, $\eta^k(t)$ simplifies to $t-k\tau$.
    
\subsection{Function \texorpdfstring{$\bm{V}$}{V} and its (semi-)continuity}
	For $\varphi\in\cC_\K\setminus\{0\}$ and $A\subset\dom\varphi$ introduce the notation
	\begin{align*}
		V^+(\varphi,A)&=\begin{cases}
			\signc(\varphi,A),&\text{if }\signc(\varphi,A)\text{ is even or infinite,}\\
			\signc(\varphi,A)+1,&\text{if }\signc(\varphi,A)\text{ is odd,}
		\end{cases}\\
		V^-(\varphi,A)&=\begin{cases}
			\signc(\varphi,A),&\text{if }\signc(\varphi,A)\text{ is odd or infinite,}\\
			\signc(\varphi,A)+1,&\text{if }\signc(\varphi,A)\text{ is even,}
		\end{cases}
	\end{align*}
	where $\signc(\varphi,A)$ denotes the number of sign changes of  $\varphi$ on the set $A$, that is
	\begin{align}
		\begin{aligned}
			\signc(\varphi,A)=\sup\{k\in \N:{}&\text{there exist }\theta_i\in A,\ 0\le i\le k,\\
			&\text{with }\theta_i<\theta_{i-1}\text{ and }\varphi(\theta_i)\varphi(\theta_{i-1})<0\text{ for }1\le i\le k\},
		\end{aligned}\label{def:sc}
	\end{align}
	assuming here $\sup\emptyset=0$. Let $\delta\in\{\pm1\}$ be fixed so that $f$ satisfies \ref{f-feedback}. For $N\geq 1$ and $-r\le a<0$,  introduce the sets
    \begin{align*}
		\cS^0&\coloneq\{\varphi\in \cC_\K^1:\text{if }\varphi(0)=0\text{, then }\dot\varphi(0)\varphi(1)>0\},\\
		\cS_a&\coloneq\{\varphi\in \cC_\K^1:\text{if }\varphi(a)=0\text{, then }\delta\varphi(N)\dot\varphi(a)<0\},\\
		\cS^*_a&\coloneq\{\varphi\in \cC_\K^1:\text{if }\varphi(s)=0\text{, for some }s\in[a,0]\text{, then }\dot\varphi(s)\ne0\},\\
		\cS^N_a&\coloneq\{\varphi\in \cC_\K^1:\text{if }\varphi(N)=0\text{, then }\delta\varphi(N-1)\varphi(a)<0\},\\
        \intertext{furthermore, for $N\ge 2$ and $1\le i\le N-1 $ let} 
    	\cS^i&\coloneq\{\varphi\in \cC_\K^1:\text{if }\varphi(i)=0\text{, then }\varphi(i-1)\varphi(i+1)<0\},\\
	\shortintertext{and finally, let} 
	\cR_a&\coloneq \cS_a\cap\left(\bigcap_{i=0}^{N-1} \cS^i\right)\cap \cS^N_a\cap \cS^*_a \subset \CK^1\setminus\{0\}.
    \end{align*}

    For clarity, for $N=0$ we only define the sets $\cS_a$, $\cS^*_a$ and $\cS^0$, where in the latter we replace $\varphi(1)$ by $\varphi(a)$, and set $\cR_a=\cS^0\cap \cS_a \cap \cS^*_a$.
    
    Let
	$$V\coloneq\begin{cases}
		V^+,&\text{if }\delta=1,\\
		V^-,&\text{if }\delta=-1,
	\end{cases}$$
	and denote
	$$\signc(\varphi,a)\coloneq\signc(\varphi,[a,0]\cup\{1,\dots,N\}),\qquad V(\varphi,a)\coloneq V(\varphi,[a,0]\cup\{1,\dots,N\})$$
	for $a\in[-r,0)$. 

    Notice that $\signc(\varphi,a)$ and $V(\varphi, a)$ are finite for all $\varphi\in \cR_a$. Roughly speaking, the set $\cR_a$ contains such members of the phase space, where $\varphi(s)=0$ ($s\in \K$) can only occur, if $\varphi$ changes its sign at $s$. The point of this rather artificially looking definition of the set $\cR_a$ is highlighted by the next lemma, which states that although $V$ is only lower semi-continuous in general, it is continuous on the set $\cR_a$. Moreover, we will later see that if $V(\varphi,\K)$ is finite and $x$ is a solution of \eqref{eq:x_unidir} with $x_{t_0}=\varphi$, then $x_t \in \cR_{-\tau(t)}$ holds for all sufficiently large $t$. 
	\begin{lemma}\label{le:V-cont}\leavevmode
		\begin{enumerate}[label=(\roman*)]
			\item For every $\varphi\in\cC_\K\setminus\{0\}$, $a\in[-r,0)$, and for every sequence $(\varphi_n)_0^\infty$ in $\cC_\K\setminus\{0\}$ and $(a_n)_0^\infty$ in $[-r,0)$ with $\|\varphi_n - \varphi\| \to 0$ and $a_n\to a$ as $n\to\infty$,
			$$V(\varphi,a)\le\liminf_{n\to\infty}V(\varphi_n,a_n).$$
			\item For every $\varphi\in \cR_a$ and $a\in[-r,0)$, and for every sequence $(\varphi_n)_0^\infty$ in $\cC_\K^1\setminus\{0\}$ and $(a_n)_0^\infty$ in $[-r,0)$ with $\|\varphi_n-\varphi\|_{1}\to0$ and $a_n\to a$ as $n\to\infty$,
			$$V(\varphi,a)=\lim_{n\to\infty}V(\varphi_n,a_n)<\infty.$$
		\end{enumerate}
	\end{lemma}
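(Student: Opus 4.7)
The plan is to treat (i) by a direct perturbation of a sign-change witness sequence and derive (ii) from (i) by adding a matching upper bound obtained from a local analysis at the zeros of $\varphi$, with the subtle boundary behaviour at $s=a$ being controlled by the defining conditions of $\cR_a$.

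For part (i), I would fix any finite $k' \le \signc(\varphi,a)$ and pick witnesses $\theta_{k'} < \dots < \theta_0$ in $[a,0] \cup \{1,\dots,N\}$ with $\varphi(\theta_{i-1})\varphi(\theta_i) < 0$; in particular each $\varphi(\theta_i) \neq 0$. If some $\theta_i$ equals the left endpoint $a$, continuity of $\varphi$ allows me to replace it by a slightly right-shifted $\theta_i' \in (a,0)$ on which $\varphi$ keeps the same sign; otherwise set $\theta_i' \coloneq \theta_i$. For $n$ large, $\theta_i' \in [a_n,0] \cup \{1,\dots,N\}$ since $a_n \to a$, and $\varphi_n(\theta_i')$ has the same sign as $\varphi(\theta_i')$ by uniform convergence, so the perturbed sequence witnesses $\signc(\varphi_n,a_n) \ge k'$. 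Since $V^+$ (resp.\ $V^-$) takes only even (resp.\ odd) values in $\N \cup \{\infty\}$ and $V^\pm - \signc \in \{0,1\}$, a short parity check upgrades this to $V(\varphi_n,a_n) \ge V(\varphi,a)$ for $n$ large; first with $k' = \signc(\varphi,a)$ when finite, then letting $k' \to \infty$ for the case $V(\varphi,a)=\infty$.

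For (ii), the finiteness $V(\varphi,a) < \infty$ is immediate: by $\varphi \in \cS^*_a$ every zero of $\varphi$ in $[a,0]$ is simple and hence isolated, so finitely many by compactness, plus at most $N$ integer zeros. By (i) it remains to prove $\limsup V(\varphi_n,a_n) \le V(\varphi,a)$. Listing the zeros of $\varphi$ in $[a,0]$ as $s_1 < \dots < s_m$, I would pick $\eps > 0$ such that the intervals $B_j \coloneq (s_j-\eps,s_j+\eps)$ are pairwise disjoint, $\dot\varphi$ has constant nonzero sign on each $\overline{B_j}\cap[a,0]$, and $|\varphi|$ is bounded below by some $c>0$ on $[a,0] \setminus \bigcup_j B_j$. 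For $n$ large, the $C^1$-closeness forces $\dot\varphi_n$ to keep the sign of $\dot\varphi$ on each $\overline{B_j}\cap[a,0]$ (so $\varphi_n$ is strictly monotone there with at most one zero per block), and $\varphi_n$ to share the sign of $\varphi$ outside $\bigcup_j B_j$. At each interior $s_j \in (a,0)$ this produces exactly one sign change of $\varphi_n$; at each integer zero $i$ of $\varphi$, the condition $\cS^i$, $\cS^N_a$, or $\cS^0$ forces strictly opposite nonzero signs at the relevant neighbors of $i$, inherited by $\varphi_n$, so that irrespective of the sign of $\varphi_n(i)$ exactly one sign change appears across the corresponding block.

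The main obstacle is the behaviour at the left boundary: if $\varphi(a) = 0$ and $a_n > a$, the unique simple zero of $\varphi_n$ near $a$ may fall inside $(a_n,0)$ and add one extra sign change to $\signc(\varphi_n,a_n)$ beyond those of $\signc(\varphi,a)$. This is exactly what the condition $\cS_a$ in $\cR_a$ is designed to control: when $\varphi(a)=0$, it ties the sign of $\dot\varphi(a)$ to the sign of $\varphi(N)$ through $\delta$, which forces the parity of $\signc(\varphi,a)$ to be odd when $V=V^+$ and even when $V=V^-$. A short case distinction between $\signc(\varphi_n,a_n) = \signc(\varphi,a)$ (the nearby zero of $\varphi_n$ lies outside $(a_n,0)$) and $\signc(\varphi_n,a_n) = \signc(\varphi,a) + 1$ (it enters) then yields, via the parities built into the definitions of $V^\pm$, the equality $V(\varphi_n,a_n) = V(\varphi,a)$ for $n$ large, which together with (i) gives the claim.
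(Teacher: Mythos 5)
Your argument follows essentially the same route as the paper's proof: part (i) by perturbing a finite witness family for $\signc$ (shifting a witness off the endpoint $a$ if necessary) and using monotonicity of the parity-rounding in the definition of $V^\pm$; part (ii) by isolating the finitely many simple zeros, using $C^1$-closeness to control $\varphi_n$ near and away from them, and invoking the parity forced by $\cS_a$ to absorb the possible extra sign change at the left endpoint.

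One point is stated inaccurately: you claim that at \emph{every} integer zero $i$ the conditions $\cS^i$, $\cS^0$, $\cS^N_a$ force exactly one sign change of $\varphi_n$ "across the corresponding block." This is true for $i\in\{0,\dots,N-1\}$, where $i$ has neighbours on both sides in the ordered set $[a,0]\cup\{1,\dots,N\}$, but it fails for $i=N$: the point $N$ is the right endpoint of the domain on which $\signc$ is computed, and $\cS^N_a$ relates $\varphi(N-1)$ to $\varphi(a)$, not to anything lying to the right of $N$. If $\varphi(N)=0$, then depending on the sign of $\varphi_n(N)$ the count $\signc(\varphi_n,a_n)$ may equal $\signc(\varphi,a)$ or $\signc(\varphi,a)+1$ — exactly the same phenomenon you analyse at the endpoint $a$. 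The repair is the one you already use there: $\cS^N_a$ (with all zeros simple and, necessarily, $\varphi(a)\neq0$ in this case) pins down the parity of $\signc(\varphi,a)$ so that $V$ is unchanged either way; this is the paper's "the case $z_0=N$ is analogous." With that correction the proposal is complete.
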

	\begin{proof}
		(i) The case $\signc(\varphi,a)=0$ is obvious. If $\signc(\varphi,a)=\infty$ then $\varphi=\lim_{n\to\infty}\varphi_n$ and $a=\lim_{n\to\infty}a_n$ easily imply that $\signc(\varphi_n,a_n)\to\infty$ as $n\to\infty$, and thus $V(\varphi,a)=\infty=\liminf_{n\to\infty}V(\varphi_n,a_n)$. 
        
        Assume that $0<\signc(\varphi,a)<\infty$. Choose $k\in\N$ and reals $\theta_0,\theta_1,\dots,\theta_k$ in $\K$ so that $k=\signc(\varphi,a)$, $a<\theta_k<\theta_{k-1}<\dots<\theta_0\le N$ and $\varphi(\theta_i)\varphi(\theta_{i-1})<0$ for all $i\in\{1,\dots,k\}$. Let $\alpha=\min\{|\varphi(\theta_i)|:\ i\in\{0,1,\dots,k\}\}$. Select $n_0\in\N$ such that $\|\varphi_n-\varphi\|<\alpha$ and $a_n<\theta_k$ for all integers $n\ge n_0$. It follows that $\varphi_n(\theta_i)\varphi_n(\theta_{i-1})<0$ for all integers $n\ge n_0$ and $i\in\{1,\dots,k\}$. Consequently, $\signc(\varphi_n,[a_n,0]\cup\{1,\dots,N\})\ge k=\signc(\varphi,a)$ for all integers $n\ge n_0$, and hence $\signc(\varphi,a)\le\liminf_{n\to\infty}\signc(\varphi_n,a_n)$ and $V(\varphi,a)\le\liminf_{n\to\infty}V(\varphi_n,a_n)$ hold.
		
		(ii) Let $\varphi\in \cR_a$. If $\varphi$ has no zeros in $[a,0]\cup\{1,\dots,N\}$, then it is easy to see that for $n$ sufficiently large, $\varphi_n$ has no zeros in $[a_n,0]\cup\{1,\dots,N\}$; it follows that $\signc(\varphi_n,a_n)=\signc(\varphi,a)$ and $V(\varphi_n,a_n)=V(\varphi,a)$.

        Otherwise, the number of zeros of $\varphi$ in $[a,0]\cup\{1,\dots,N\}$ is finite, and they are all simple in $[a,0]$. Suppose that the zeros are given by a strictly decreasing finite sequence $(z_i)_0^k$, $k\in\N$. For $\eps\in(0,1)$ and $i\in\{0,\dots,k\}$, let $U(z_i,\eps)$ denote the open $\eps$-neighborhood of $z_i$ in $\K$. Note that if $z_i\in\{1,\dots,N\}$ for some $i\in\{0,\dots,k\}$ then $U(z_i,\eps)=\{z_i\}$. We can choose $\eps_1>0$ so small that the intervals $U(z_k,\eps_1),U(z_{k-1},\eps_1),\dots,U(z_0,\eps_1)$ are pairwise disjoint, and from the simplicity of the zeros
		\begin{equation*}
			|\dot\varphi(s)|\geq \eps_1\quad\text{for all }s\in[a,0]\cap\bigcup_{i=0}^kU(z_i,\eps_1)
		\end{equation*}
        with appropriate one-sided derivates, where necessary. As the set $$([a,0]\cup\{1,\dots,N\})\setminus\bigcup_{i=0}^kU(z_k,\eps_1)$$ is compact, we can choose $\eps_2>0$ so that
        \begin{equation*}
            |\varphi(s)|\geq \eps_2^2\quad\text{for all }s\in[a,0]\cup\{1,\dots,N\}\setminus\bigcup_{i=0}^kU(z_i,\eps_1).
		\end{equation*}
		Setting
		$$0<\eps \leq \min\{\eps_1, \eps_2\},$$
		we clearly have
		\begin{alignat}{3}
			|\dot\varphi(s)|& \geq  \eps&\quad&\text{for all }s\in[a,0]\cap\bigcup_{i=0}^kU(z_i,\eps).\label{ineq:phi_der}
			\shortintertext{and}
			|\varphi(s)|&\geq \eps^2&\quad&\text{for all }s\in[a,0]\cup\{1,\dots,N\}\setminus\bigcup_{i=0}^kU(z_i,\eps_1).\label{ineq:phi_1}
        \end{alignat}
		Furthermore, by integration we obtain from inequality \eqref{ineq:phi_der} that
		\begin{equation}
            |\varphi(s)|\geq \eps_1\eps\ge\eps^2\quad\text{for all }s\in[a,0]\cap\bigcup_{i=0}^k\left(U(z_i,\eps_1)\setminus U(z_i,\eps)\right).\label{ineq:phi_2}
        \end{equation}
	Inequalities \eqref{ineq:phi_1} and \eqref{ineq:phi_2} imply
	\begin{equation}
	    |\varphi(s)|\geq \eps^2\quad\text{for all }s\in[a,0]\cup\{1,\dots,N\}\setminus\bigcup_{i=0}^kU(z_i,\eps).\label{ineq:phi}
		\end{equation}	
		
        First, consider the case when $a<z_k$ and $z_0<N$. Then for $\eps$ small enough, $a\not\in U(z_k,\eps)$ and $N\not\in U(z_0,\eps)$ hold. Select $n_0\in\N$ such that $\|\varphi_n-\varphi\|_{1}<\eps^2$ and $a_n<z_k-\eps$ for all $n\ge n_0$. Then
		\begin{align*}
			\varphi_n(s)\varphi(s)&>0\quad\text{for all }s\in[a,0]\cup\{1,\dots,N\}\setminus\bigcup_{i=0}^kU(z_i,\eps),\\			\dot\varphi_n(s)\dot\varphi(s)&>0\quad\text{for all }s\in[a,0]\cap\bigcup_{i=0}^kU(z_i,\eps)
		\end{align*}
	hold. This together with inequalities \eqref{ineq:phi_der} and \eqref{ineq:phi} imply $\signc(\varphi_n,a_n)=\signc(\varphi,a)$, so $V(\varphi_n,a_n)=V(\varphi,a)$.
        
        Now assume that $z_k=a$. Then $z_k\ne N$ because of $\varphi\in \cS_a$. A similar argument as in the previous case shows that either $\signc(\varphi_n,a_n)=\signc(\varphi,a)$ or $\signc(\varphi_n,a_n)=\signc(\varphi,a)+1$. From $\delta\varphi(N)\dot\varphi(a)<0$ and the simplicity of the zeros of $\varphi$ it follows that $\signc(\varphi,a)$ is odd if $\delta=1$ and even if $\delta=-1$. Consequently, $V(\varphi_n,a_n)=V(\varphi,a)$. The case $z_0=N$ is analogous.
	\end{proof}

	
	\subsection{Related nonautonomous linear system}
	In this part, we show that when studying the properties of $V$ along solutions of equation \eqref{eq:x_unidir}, it suffices to restrict ourselves to a linear subclass. This makes the analysis considerably simpler.
    
	Suppose that $x$ is a solution of \eqref{eq:x_unidir} on an interval $I$. Define
	\begin{equation}\label{def:ab}
		a^i(t)=\int_0^1D_2f^i(t,hx^i(t),x^{i+1}(t))\intd h,\qquad b^i(t)=\int_0^1D_3f^i(t,0,hx^{i+1}(t))\intd h
	\end{equation}
for $0\le i\le N$ (recall notation \eqref{N+1-notation}). Note that
	$$b^i(t)=\begin{dcases}
		\dfrac{f^i(t,0,x^{i+1}(t))}{x^{i+1}(t)}, &\text{if }x^{i+1}(t)\ne0,\\
		 D_3f^i(t,0,0), &\text{if }x^{i+1}(t)=0,
	\end{dcases} $$
and in particular, $b^i(t)>0$ ($0\leq i\leq N-1$) and $\delta b^N(t)>0$ hold for all $t\in I$.
	Then $x$ also solves the system
	\begin{equation}
		\begin{alignedat}{3}
			\dot x^i(t)&=a^i(t)x^i(t)+b^i(t)x^{i+1}(t),&\qquad& i\in \{0,\dots,N-1\} \\
			\dot x^N(t)&=a^N(t)x^N(t)+ b^N(t)x^0(\eta(t))&\qquad&
		\end{alignedat}\label{eq:x}
	\end{equation}
	on $I$.  Setting
	\begin{align}
		\begin{aligned}
			&y^0\colon \eta(I)\cup I\ni t\mapsto \exp\left(-\int_{t_0}^ta^0(s)\,\intd s\right)x^0(t)\in\R,\\
			&y^i\colon I\ni t\mapsto \exp\left(-\int_{t_0}^ta^i(s)\,\intd s\right)x^i(t)\in\R,\qquad i\in \{1,\dots,N\},
		\end{aligned}\label{def:y}
	\end{align}
	we obtain a solution $y=(y^0,\dots,y^N)$ of 
	\begin{align}
		\begin{aligned}
			\dot y^i(t)&=c^i(t)y^{i+1}(t),&\qquad& i\in \{0,\dots,N-1\}\\
			\dot y^N(t)&= c^N(t)y^0(\eta(t)) &\qquad&
		\end{aligned}\label{eq:y}
	\end{align}
	on $I$ with $t_0\in I$ fixed, and
    \begin{align*}
    c^i(t)&\coloneqq b^i(t)\exp\biggl(\int_{t_0}^{t}a^{i+1}(s) \intd s-\int_{t_0}^ta^i(s) \intd s\biggr)>0,\qquad i\in\{0,\dots,N-1\},\\
    \delta c^N(t)&\coloneqq  \delta b^N(t)\exp\biggl(\int_{t_0}^{\eta(t)}a^0(s) \intd s-\int_{t_0}^t a^N(s) \intd s\biggr)>0.
    \end{align*}

    The next lemma reveals why this transformation is beneficial: we may study properties of $V$ along solutions of equation \eqref{eq:y}, instead of \eqref{eq:x_unidir}, while the former equation has a significantly simpler structure and fulfills a strict feedback condition.
	
    \begin{lemma}\label{th:x_t} Let $x$ be a solution of equation \eqref{eq:x_unidir} on an interval $I$ and $y$ be defined by \eqref{def:ab} and \eqref{def:y}. Then the following properties hold:
    \begin{enumerate}[label=(\roman*)]
        \item $y$ solves equation \eqref{eq:y} on $I$;\label{y-is-sol}
        \item $\operatorname{sgn}x_t(s)=\operatorname{sgn}y_t(s)$ for all $t\in I$ and $s\in \K$; \label{sgnx=sgny}
        \item if $x_{t_0}=0\in \CK$ ($y_{t_0}=0$) for some $t_0\in I$, then $x_t=0$ for all $t\in I$; \label{0-is-invariant}
        \item $\signc(x_t,-\tau(t))=\signc(y_t,-\tau(t))$ and therefore $V(x_t,-\tau(t))=V(y_t,-\tau(t))$ hold for all $t\in I$;\label{Vx-equal-Vy}
	\item	if $t\in I$ and $y_t\in \cR_{-\tau(t)}$, then $x_t\in \cR_{-    \tau(t)}$.\label{x-and-y-in-R}
    \end{enumerate}
    \end{lemma}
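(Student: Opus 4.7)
The plan is to take the five claims in turn, leveraging the structural link between $x$ and $y$ from \eqref{def:y}: all exponential weights are strictly positive, so $x_t(s)$ and $y_t(s)$ share signs pointwise, and at common zeros of $x^0$ and $y^0$ their derivatives share signs as well. This reduces most of the work to mechanical bookkeeping.

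For (i), I would differentiate $y^i(t)=e^{-\int_{t_0}^t a^i(s)\intd s}x^i(t)$ via the product rule and substitute \eqref{eq:x} for $\dot x^i$; the term $a^i(t)x^i(t)$ cancels the derivative of the exponential, leaving $b^i(t)\,e^{-\int_{t_0}^t a^i}\,x^{i+1}(t)$, which rewrites as $c^i(t)\,y^{i+1}(t)$ using the definition of $c^i$ together with $y^{i+1}(t)=e^{-\int_{t_0}^t a^{i+1}}x^{i+1}(t)$. The case $i=N$ is analogous, with $\eta(t)$ replacing $t$ inside the $y^0$-exponential. For (ii), the identities $x_t(s)=e^{\int_{t_0}^{t+s}a^0}\,y_t(s)$ for $s\in[-r,0]$ and $x_t(i)=e^{\int_{t_0}^t a^i}\,y_t(i)$ for $i\in\{1,\dots,N\}$ show that the two functions differ by a strictly positive factor at each point, so their signs coincide pointwise. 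Claim (iv) then follows at once, since $\signc(\varphi,A)$ in \eqref{def:sc} depends on $\varphi$ only through its signs at the chosen test points.

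For (iii), by (ii) it suffices to show $y\equiv 0$ on $I$ given $y_{t_0}=0$. Forward in time this is standard forward uniqueness for the linear equation \eqref{eq:y} with continuous coefficients $c^i$. The delicate part is backward propagation when $I$ extends below $t_0$. Here I would exploit the unidirectional cyclic structure: the only delayed term in \eqref{eq:y} is $y^0\circ\eta$ inside the $y^N$-equation, and on $[\eta^{-1}(t_0-r),t_0]$ this forcing vanishes by hypothesis; backward integration of the $y^N$-equation, followed by the local ODE cascade $y^{N-1}\to y^{N-2}\to\cdots\to y^0$, forces the vector $y$ to vanish on that neighborhood, after which successive iteration propagates the vanishing across $I$. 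This backward step is the main obstacle of the lemma; the other claims reduce to the observations above.

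For (v), I would verify each defining condition of $\cR_{-\tau(t)}$ for $x_t$ assuming it holds for $y_t$. The sets $\cS^i$ (for $1\le i\le N-1$) and $\cS^N_a$ constrain only signs of values at discrete points, so they transfer immediately via (ii). For the derivative-at-zero conditions $\cS^0$, $\cS_a$, and $\cS^*_a$, I differentiate $y^0(u)=e^{-\int_{t_0}^u a^0(s)\intd s}\,x^0(u)$ to obtain $\dot y^0(u)=e^{-\int_{t_0}^u a^0}\bigl(\dot x^0(u)-a^0(u)\,x^0(u)\bigr)$; at any $u$ with $x^0(u)=0$ (equivalently $y^0(u)=0$), the $a^0(u)x^0(u)$ term vanishes, so $\dot x^0(u)$ and $\dot y^0(u)$ differ by a strictly positive factor and share their sign. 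Combined with (ii), this makes each strict-sign inequality defining $\cS^0$, $\cS_a$, and $\cS^*_a$ transfer verbatim from $y_t$ to $x_t$.
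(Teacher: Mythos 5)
Your handling of items (i), (ii), (iv) and (v) is correct and essentially identical to the paper's: the paper declares the first four items straightforward and only writes out (v), where it uses exactly your identity $\dot y^i(s)=\exp\bigl(-\int_{t_0}^s a^i(\theta)\intd\theta\bigr)\bigl(\dot x^i(s)-a^i(s)x^i(s)\bigr)$ to transfer the derivative-sign conditions in $\cS^0$, $\cS_{-\tau(t)}$, $\cS^*_{-\tau(t)}$ at common zeros, and item (ii) for the purely pointwise conditions in $\cS^i$ and $\cS^N_{-\tau(t)}$.

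The gap is in the backward half of (iii), which you correctly single out as the only delicate point but then resolve with a mechanism that runs in the wrong direction. You argue: the forcing $y^0(\eta(t))$ vanishes for $t\in[\eta^{-1}(t_0-r),t_0]$, so backward integration of the $y^N$-equation and the cascade $y^{N-1}\to\dots\to y^0$ give $y\equiv 0$ there, and then one iterates. But since $\eta(s)<s$, one has $\eta^{-1}(u)>u$, so $[\eta^{-1}(t_0-r),t_0]\subseteq[t_0-r,t_0]$: after your first step the zero set of $y^0$ has not been enlarged at all, and iterating with $\eta^{-1}$ only moves the known-vanishing window \emph{forward} in time. The induction therefore stalls at $t_0-r$ and never reaches points of $I$ below it. The correct backward mechanism reads the equations in the reverse logical direction: from $y^0\equiv 0$ on $[t_0-r,t_0]$ one gets $\dot y^0\equiv 0$ there, hence $y^1\equiv 0$ on $[t_0-r,t_0]\cap I$ because $c^0>0$, and inductively $y^1=\dots=y^N\equiv 0$ on $[t_0-r,t_0]\cap I$; then $\dot y^N\equiv 0$ forces $y^0(\eta(t))=0$ for all such $t$, i.e.\ $y^0\equiv 0$ on $[\eta(t_0-r),\eta(t_0)]$, and since $\eta(t_0)\ge t_0-r$ this extends the zero set of $y^0$ down to $\eta(t_0-r)<t_0-r$. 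Iterating pushes the vanishing down along $\eta^k(t_0-r)$, and these iterates cannot accumulate at an interior point $\sigma^*$ of $I$ (continuity would give $\eta(\sigma^*)=\sigma^*$, contradicting $\tau>0$ in \ref{delay-prop}), so the whole of $I$ to the left of $t_0$ is covered. With this replacement your proof of the lemma is complete.
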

	\begin{proof} The first four statements are straightforward consequences of the definition of $y$ and the functions $c^i$.
    
	To see \ref{x-and-y-in-R}, let $t\in I$ with $\eta(t)\in I$ and $y_t\in \cR_{-\tau(t)}$ be given. Then we have $y_t\in\cC_\K^1$ and
		\begin{align}
			\dot y^i(s)=\exp\left(-\int_{t_0}^sa^i(\theta)\intd\theta\right)\left(\dot x^i(s)-a^i(s)x^i(s)\right)\label{eq:doty}
		\end{align}
		for $s\in(\eta(t),t]$, $i=0,\dots,N$; at $s=\eta(t)$ the analog of \eqref{eq:doty} for the right derivatives $D^+{y^0}(\eta(t))$ and $D^+{x^0}(\eta(t))$ holds.
		
		Assume that $s\in(\eta(t),t)$ and $x^0(s)=0$. From statement \ref{sgnx=sgny} and $y_t\in \cS^*_{-\tau(t)}$ it follows that $y^0(s)=0$ and $\dot y^0(s)\ne0$. Hence \eqref{eq:doty} implies $\dot x^0(s)\ne 0$ and $x_t\in \cS^*_{-\tau(t)}$.
		
		Now assume that $x^0(t)=0$. Then statement \ref{sgnx=sgny} and $y_t\in \cS^0$ imply $y^0(t)=0$ and $\dot y^0(t)y^1(t)>0$. Hence, \eqref{eq:doty} with $i=0$ and $s=t$ yields $\dot y^0(t)\dot x^0(t)>0$. Using again statement \ref{sgnx=sgny}, it follows that $y^1(t)x^1(t)>0$. Therefore, $\dot x^0(t)x^1(t)>0$ and $x_t\in \cS^0$.
		
		Suppose that $x^i(t)=0$ for some $i\in\{1,\dots,N-1\}$. Again, from statement \ref{sgnx=sgny} and $y_t\in \cS^i$, it follows that $y^i(t)=0$ and $y^{i-1}(t)y^{i+1}(t)<0$. Hence $x^{i-1}(t)x^{i+1}(t)<0$ and $x_t\in \cS^i$.
		
		Now assume that $x^0(\eta(t))=0$. Using statement \ref{sgnx=sgny} and $y_t\in \cS_{-\tau(t)}$ we infer $y^0(\eta(t))=0$ and $\delta y^N(t)\dot y^0(\eta(t))<0$. Thus, from \eqref{eq:doty} it follows that $\dot y^0(\eta(t))\dot x^0(\eta(t))>0$, and then $\delta x^N(t)\dot x^0(\eta(t))<0$, $x_t\in \cS_{-\tau(t)}$.
		
		Finally, if $x^N(t)=0$, then statement \ref{sgnx=sgny} combined with $y_t\in \cS^N_{-\tau(t)}$ implies $y^N(t)=0$ and $\delta y^{N-1}(t)y^0(t-\tau(t))<0$. In view of equation \eqref{eq:doty}, we obtain $\delta x^{N-1}(t)x^0(t-\tau(t))<0$, hence $x_t\in \cS^N_{-\tau(t)}$.
		
		In summary, we obtain $x_t\in \cR_{-\tau(t)}$.
	\end{proof}
	
	
	\section{Main results}\label{section:main-results}

This section is devoted to our main results, generalizing analogous results for constant delay \cite{mallet-paret:sell:96-lyapunov} and scalar DDEs with variable delay \cite{krisztin:arino:01}. The proofs combine ideas from these two papers and from the monograph \cite{krisztin:walther:wu:99}.

In \cref{sec:V-basic-prop}, we prove first that $V(x_t,-\tau(t))$ is nonincreasing along solutions of equation \eqref{eq:x_unidir} (see \cref{thm:V-decreasing}). This is followed by a result (\cref{thm:V-dropping}) that provides a criterion when the value of $V$ must drop. \cref{thm:solutions-regularize} yields conditions that assure that a solution segment $x_t$ is in the set $\cR_{-\tau(t)}$, where $V$ is continuous according to \cref{le:V-cont}. Finally, we establish sufficient criteria for the finiteness of $V$ in \cref{sec:V-finite}.

\subsection{Properties of \texorpdfstring{$\bm{V}$}{V}}\label{sec:V-basic-prop}

The following two rather obvious observations will be frequently used in our arguments.

    \begin{remark}\label{re:cut}
		If a real function $\varphi$ is defined on a set $A\subset\R$, and there exists $a\in A$ such that $\varphi(a)\ne0$, then $$\signc(\varphi,A)=\signc(\varphi,A\cap(-\infty,a])+\signc(\varphi,A\cap[a,\infty)).$$
	\end{remark}
    
	\begin{remark}\label{re:theta}
		Assume that $y$ is a solution of equation \eqref{eq:y} on an interval $I$ and $k=\signc(y_t,-\tau(t))$ for some $t\in I$, and suppose that $\theta_i$, $0\le i\le k$ are chosen as in \eqref{def:sc} for $\varphi=y_t$. Then for $\eps>0$ small enough $y_{s}(\theta_i)y_t(\theta_i)>0$, holds for all $i\in \{0,\dots, k\}$ and $s\in (t-\eps,t+\eps)\cap I$.
	\end{remark}

    The next theorem is the reason we refer to $V$ as a (time-variable) discrete Lyapunov functional.
    
	\begin{theorem}\label{thm:V-decreasing}
		Assume that $x$ is a solution of \eqref{eq:x_unidir} on an interval $I$, and $x_t\ne 0$ for some $t\in I$. Then for all $t_1,t_2$ in $I$ with $t_1<t_2$, $V(x_{t_1},-\tau(t_1))\ge V(x_{t_2},-\tau(t_2))$.
	\end{theorem}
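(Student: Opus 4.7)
The plan is to reduce first, via parts~\ref{0-is-invariant} and~\ref{Vx-equal-Vy} of Lemma~\ref{th:x_t}, to the auxiliary linear system~\eqref{eq:y}: setting $y$ by~\eqref{def:y}, one has $V(x_t,-\tau(t))=V(y_t,-\tau(t))$ throughout $I$, $y_t\ne 0$ for every $t\in I$, and \eqref{eq:y} enjoys the strict feedback $c^i>0$ and $\delta c^N>0$. It therefore suffices to prove that $t\mapsto V(y_t,-\tau(t))$ is nonincreasing on $I$.

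The strategy is to combine a local upper bound with the lower semi-continuity from Lemma~\ref{le:V-cont}(i). Concretely, I would establish that for every $t^\ast\in I$ there exists $\eps>0$ with
\[
V(y_t,-\tau(t))\le V(y_{t^\ast},-\tau(t^\ast))\qquad\text{for all }t\in[t^\ast,t^\ast+\eps)\cap I.
\]
Combined with lower semi-continuity of $V$ along the $\cC_\K$-continuous map $t\mapsto y_t$ and the continuous map $t\mapsto -\tau(t)$, this turns into the full monotonicity via a standard supremum argument: for $t_1<t_2$ in $I$, set
\[
t^\star=\sup\bigl\{t\in[t_1,t_2]:V(y_s,-\tau(s))\le V(y_{t_1},-\tau(t_1))\text{ for all }s\in[t_1,t]\bigr\}.
\]
Lemma~\ref{le:V-cont}(i) forces $V(y_{t^\star},-\tau(t^\star))\le V(y_{t_1},-\tau(t_1))$, and the local upper bound applied at $t^\star$ rules out $t^\star<t_2$.

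For the local upper bound itself, if $y_{t^\ast}\in\cR_{-\tau(t^\ast)}$ then $\cC_\K^1$-continuity of $t\mapsto y_t$ (which holds because $y^0$ is $C^1$) combined with Lemma~\ref{le:V-cont}(ii) gives $V(y_t,-\tau(t))\to V(y_{t^\ast},-\tau(t^\ast))$, and integrality forces local constancy. If $y_{t^\ast}\notin\cR_{-\tau(t^\ast)}$, some defining zero condition fails, and one exploits the strict feedback of \eqref{eq:y} to pin down the sign of each relevant derivative: wherever $y^i(t^\ast)=0$ with $0\le i\le N-1$, the relation $\dot y^i=c^i y^{i+1}$ forces $\operatorname{sgn}\dot y^i(t^\ast)=\operatorname{sgn} y^{i+1}(t^\ast)$, and the analogous statement holds for $i=N$ via $\dot y^N(t)=c^N(t)y^0(\eta(t))$ with $\delta c^N>0$. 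A case split on where the failure occurs (an interior zero of $y^0$ in $(\eta(t^\ast),t^\ast)$, a boundary zero at $\eta(t^\ast)$ or at $t^\ast$, or a zero at some discrete slot $i\in\{1,\dots,N\}$) will show that immediately after $t^\ast$ each configuration in which $V$ could change corresponds to a sign change that is preserved or lost, never created. Remarks~\ref{re:cut} and~\ref{re:theta} then provide the bookkeeping tools to turn this local sign analysis into the inequality $V(y_t,-\tau(t))\le V(y_{t^\ast},-\tau(t^\ast))$.

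The hard part will be the degenerate case, particularly configurations with several coincident zeros propagating through the cyclic chain (for example $y^i(t^\ast)=y^{i+1}(t^\ast)=0$) or with a non-simple zero of $y^0$ in $[\eta(t^\ast),t^\ast]$. In such situations one must iterate the feedback relation around the cycle until the first non-vanishing coordinate is located, and then reconstruct the direction of sign evolution from that datum; carrying out this bookkeeping carefully enough to cover every way the integer $V$ could jump is the main technical burden of the proof.
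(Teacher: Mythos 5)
Your reduction to the linear system \eqref{eq:y} via Lemma~\ref{th:x_t}, and your scheme of proving a one-sided local bound and then globalizing it through the supremum $t^\star$ together with Lemma~\ref{le:V-cont}(i), coincide exactly with the paper's argument. The problem is that the local bound itself -- which is the entire content of the theorem -- is left as a plan rather than carried out, and the one substantive claim you make about it is not correct as stated. You assert that after $t^\ast$ a sign change is ``preserved or lost, never created.'' In fact new sign changes \emph{are} created: if $\theta_0\coloneq\max\{i:y_{t^\ast}(i)\ne 0\}<N$, then $y^{ \theta_0+1},\dots,y^N$ all vanish at $t^\ast$, and (unless $y^0$ vanishes identically on an interval to the right of $\eta(t^\ast)$) integrating \eqref{eq:y} shows that for small $\eps>0$ the values $y_{t^\ast+\eps}(\theta_0+1),\dots,y_{t^\ast+\eps}(N)$ all acquire a common nonzero sign equal to $\delta\cdot\operatorname{sgn}y_{t^\ast+\eps}(\theta_k)$. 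This can raise $\signc$ from $k$ to $k+1$. The reason $V$ nevertheless does not increase is a parity argument: the new count $k+1$ occurs precisely when $\delta=1$ and $k$ is odd, or $\delta=-1$ and $k$ is even, which is exactly the situation in which the definitions of $V^+$ and $V^-$ already round $k$ up to $k+1$. This parity mechanism is the reason $V$ is defined with the even/odd adjustment at all, and it is absent from your sketch; without it the local inequality $V(y_{t^\ast+\eps},-\tau(t^\ast+\eps))\le V(y_{t^\ast},-\tau(t^\ast))$ cannot be concluded from sign bookkeeping alone.

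A second, smaller issue: you propose to dispose of the case $y_{t^\ast}\in\cR_{-\tau(t^\ast)}$ via Lemma~\ref{le:V-cont}(ii), but that lemma requires convergence in $\|\cdot\|_1$, and near the left endpoint of $I$ the segment $y_{t}|_{[-r,0]}$ need not depend $C^1$-continuously on $t$; the paper avoids this by running a single direct integration argument (choosing the $\theta_j$ as in \eqref{eq:theta-choice} and splitting on $\theta_0=N$ versus $\theta_0<N$, with a further subcase when $y^0$ vanishes identically just to the right of $\eta(t^\ast)$) that does not distinguish whether the segment lies in $\cR$. You correctly identify the degenerate configurations as the hard part, but identifying them is not the same as resolving them, so as it stands the proposal is an outline of the right strategy with the decisive step missing.
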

	\begin{proof}
		Consider \eqref{def:y}. By \cref{th:x_t}, it is enough to show the statement for $y$ instead of $x$.
		
		We claim that it suffices to show that for all $t \in I$ there exists $\eps^*=\eps^*(t)>0$ such that for all $\eps \in\left[0,\eps^*\right]$ with $t+\eps \in I$,
		\begin{align}
			V(y_t,-\tau(t))\ge V(y_{t+\eps},-\tau(t+\eps))\label{ineq:V}
		\end{align}
		holds. To see this, let $t_1, t_2$ in $I$ with $t_1<t_2$ be given and assume that for every $t \in I$ there is $\eps^*=\eps^*(t)>0$ so that for all $\eps \in[0,\eps^*]$ with $t+\eps \in I$ we have \eqref{ineq:V}. Define 
        $$t^*=\sup \{s\in[t_1,t_2]: V(y_{t_1},-\tau(t_1))\ge V(y_u,-\tau(u))\text{ for all }t_1\le u\le s\}.$$
        Then $t_1<t^*\le t_2$. From the definition of $t^*$ it follows that there is a sequence $(s_n)_0^{\infty}$ in $[t_1,t^*]$ such that $s_n\to t^*$ as $n\to\infty$ and $V(y_{t_1},-\tau(t_1))\ge V(y_{s_n},-\tau(s_n))$ for all $n\in\N$. Clearly, $y_{s_n}\to y_{t^*}$ as $n\to\infty$. Then \cref{le:V-cont} (i) yields $V(y_{t_1},-\tau(t_1))\ge V(y_{t^*},-\tau(t^*))$. If $t^*<t_2$, then there is $\eps^*(t^*) \in(0,t_2-t^*]$ such that $V(y_{t_1},-\tau(t_1))\ge V(y_{t^*},-\tau(t^*))\ge V(y_{t^*+\eps},-\tau(t^*+\eps))$ for all $\eps\in[0,\eps^*(t^*)]$. This contradicts the definition of $t^*$. Consequently, $t^*=t_2$, and thus the claim holds.
		
		The case $V(y_t,-\tau(t))=\infty$ is obvious, so we may assume that $V(y_t,-\tau(t))<\infty$. Let $k=\signc(y_t,-\tau(t))$, and let $n=\signc(y_t,\{0,\dots,N\})\le k$. 
		
		If there exists $i\in\{0,\dots,N\}$ such that $y_t(i)\ne0$, then let us choose the values of $\theta_i$ ($i\in\{0,\dots,n\}$) from \eqref{def:sc} such that
		\begin{equation}
            \begin{aligned}\label{eq:theta-choice}
			\theta_0&\coloneq\max\{i\in\{0,\dots,N\}: y_t(i)\ne0\},\\
			\theta_j&\coloneq\max\{i\in\{0,\dots,\theta_{j-1}-1\}: y_t(i)y_t(\theta_{j-1})<0\},\quad j=1,\dots,n.
            \end{aligned}
		\end{equation}
		and $y_t(i)y_t(\theta_n)\ge0$ holds for all $i\in\{0,\dots,\theta_n\}$, furthermore, if $n<k$, then $\theta_j\in(-\tau(t),0)$ for $j\in\{n+1,\dots,k\}$. 
		
		\begin{figure}[ht]\centering
			\includegraphics[width=0.8\textwidth]{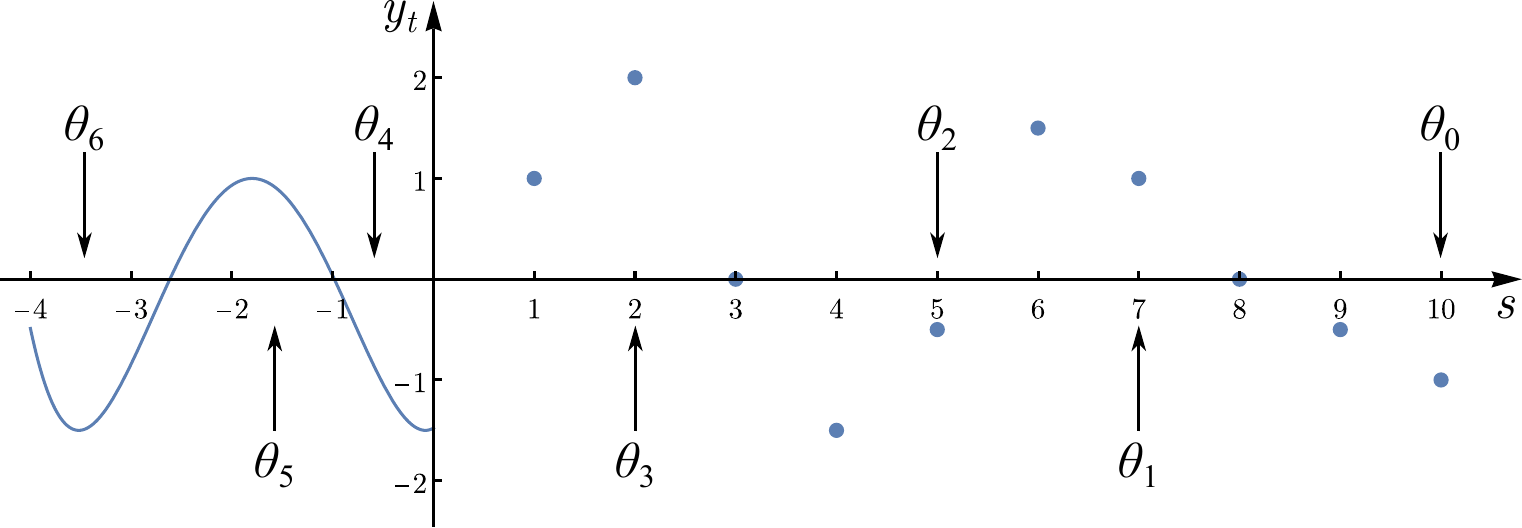}
			\caption{Example for the choice of $\theta_i$, $i=0,\dots,k$. Here $N=10$, $\tau(t)=4$, $k=6$, and $n=3$.\label{fig:thetas}}
		\end{figure}
		
		Hence, $\signc(y_t,[\theta_{j+1},\theta_j]\cap\K)=1$ for $j=0,\dots,k-1$, and by  \cref{re:cut},
		$$\signc(y_t,-\tau(t))=\sum_{j=0}^{k-1}\signc(y_t,[\theta_{j+1},\theta_j]\cap\K)=k.$$
		Let $\eps>0$ be so small, that $y_{t+\eps_1}(\theta_j)$ and $y_t(\theta_j)$ have the same sign for any $t\in I$, $\eps_1\in[0,\eps]$ and $j=0,\dots,k$. Using the continuity of the delay, we can assume that $\theta_k> -\tau(t+\eps)$ also holds.
		       
        There are two cases depending on $\theta_0$.
	\medskip
        
		\noindent \textbf{Case 1.} $\theta_0=N$. Recall that the functions $c^i$ ($i\in \{0,\dots,N-1\}$) and $\delta c^N$ related to system \eqref{eq:y} are all strictly positive. By integrating the $\theta_j-1$-th, \dots, $\theta_{j+1}+1$-th equations in \eqref{eq:y}, we obtain that, for small $\eps>0$ and $0\le j\le n-1$, $y_{t+\eps}(\theta_j)$ and $y_{t+\eps}(l)$ have the same nonzero sign for all  $l\in \{\theta_{j+1},\dots,\theta_j\}$. Similarly, if $\theta_n>0$, then for small $\eps>0$, $y_{t+\eps}(\theta_n)$ and $y_{t+\eps}(l)$ have the same nonzero sign for $l\in \{0,\dots,\theta_n\}$. This together with the continuity of $\eta$ implies that there exists $\eps>0$ small enough that
		$y_t(i)y_{t+\eps}(i)\ge0$ for $i=0,\dots,N$, and $y_t(\theta_j)y_{t+\eps}(\theta_j)>0$ for $j=0,\dots,k$. From the equality
		\begin{align*}
			\signc(y_{t+\eps},-\tau(t+\eps))&=\signc(y_{t+\eps},[-\tau(t+\eps),\theta_k]\cap\K)+\sum_{j=0}^{k-1}\signc(y_{t+\eps},[\theta_{j+1},\theta_j]\cap\K),
		\end{align*}
		as $\signc(y_{t+\eps},[-\tau(t+\eps),\theta_k]\cap\K)=0$ and $\signc(y_{t+\eps},[\theta_{j+1},\theta_j]\cap\K)=1$ for all $j=0,\dots,k-1$, we obtain
		\begin{align*}
			\signc(y_{t+\eps},-\tau(t+\eps))&=k=\signc(y_t,-\tau(t)).
		\end{align*}
		Hence $V(y_{t+\eps},-\tau(t+\eps))=V(y_t,-\tau(t))$.
		\medskip
        
		\noindent\textbf{Case 2.} $\theta_0<N$. As in the previous case, for $\eps>0$ small enough, we have
		\begin{align}
			\signc(y_t,-\tau(t))&=\overbrace{\signc(y_t,[-\tau(t),\theta_0]\cap\K)}^k+\overbrace{\signc(y_t,[\theta_0,N]\cap\K)}^0=k,\nonumber\\
			\signc(y_{t+\eps},-\tau(t+\eps))&=\overbrace{\signc(y_{t+\eps},[-\tau(t+\eps),\theta_0]\cap\K)}^k+\signc(y_{t+\eps},[\theta_0,N]\cap\K). \label{eq:yteps}
		\end{align}
		We claim that $\signc(y_{t+\eps},-\tau(t+\eps))\le k+1$ for sufficiently small $\eps$. We distinguish two cases:
		
		If $\eps_1\in(0,\tau(t))$ can be chosen such that $y_t|_{[-\tau(t),-\tau(t)+\eps_1]}= y^0|_{[t-\tau(t),t-\tau(t)+\eps_1]}\equiv0$, then using continuity of $\eta$ and integrating equations in system \eqref{eq:y} we infer that there exists $\eps>0$ such that 
		$$y_{t+\eps}(N)=\dots=y_{t+\eps}(\theta_0+1)=0.$$
		Hence $\signc(y_{t+\eps},[\theta_0,N]\cap\K)=0$, and $\signc(y_t,-\tau(t))=\signc(y_{t+\eps},-\tau(t+\eps))$ hold, which implies $V(y_t,-\tau(t))=V(y_{t+\eps},-\tau(t+\eps))$.
		
		Otherwise, for all $\eps>0$ small enough,
		$$\int_{t}^{t+\eps}y^0(\eta(s))\,\intd s\ne0$$
		(see the definition of $\theta_k$). It follows again by integration  that $y_{t+\eps}(N),\dots,y_{t+\eps}(\theta_0+1)$ have the same nonzero sign, that is equal to the sign of $y_{t+\eps}(\theta_k)$ if $\delta=1$, and the opposite if $\delta=-1$. Now  $\signc(y_{t+\eps},-\tau(t+\eps))\le k+1$ follows from \eqref{eq:yteps}, and equality holds if and only if $\delta=1$ and $k$ is odd, or $\delta=-1$ and $k$ is even. In both cases $V(y_{t+\eps},-\tau(t+\eps))=V(y_t,-\tau(t))$.
	\end{proof}
	
	Let us denote the iterates of $\eta$ by $\eta^0(t)=t$ and $\eta^n(t)=\eta(\eta^{n-1}(t))$ for $n=1,2,\dots$
	
	\begin{theorem}\label{thm:V-dropping}
		Assume that $x$ is a solution of \eqref{eq:x_unidir} on the interval $I$. Furthermore, suppose $x_t\ne0$ for some $t\in I$ with $\eta^2(t)\in I$ and $x_t(i)=x_t(i+1)=0$ for some $i\in\{0,\dots,N\}$. Then either $V(x_t,-\tau(t))=\infty$ or $V(x_{\eta^2(t)},-\tau(\eta^2(t)))>V(x_t,-\tau(t))$. 
	\end{theorem}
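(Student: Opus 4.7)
The plan is as follows. First reduce to the linear system: by \cref{th:x_t}~(iv), $V(x_s,-\tau(s))=V(y_s,-\tau(s))$ for every admissible $s$, where $y$ is the transformed solution of \eqref{eq:y}. If $V(y_t,-\tau(t))=\infty$ the first alternative of the theorem holds, so I would assume $V(y_t,-\tau(t))<\infty$ and put $k=\signc(y_t,-\tau(t))$. From \cref{thm:V-decreasing} we already have $V(y_{\eta^2(t)},-\tau(\eta^2(t)))\ge V(y_t,-\tau(t))$; the task is then to exhibit at least one extra, correctly placed, sign change of $y_{\eta^2(t)}$ so that this becomes strict. Since $V^\pm$ takes only values of a fixed parity, the position of the new sign change has to be matched against the feedback sign $\delta$.

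Now exploit the degeneracy. By \cref{th:x_t}~(ii) the hypothesis transfers to $y_t(i)=y_t(i+1)=0$, which unfolds into two structural cases: either $y^i(t)=y^{i+1}(t)=0$ for some $i\in\{0,\dots,N-1\}$, or $y^N(t)=y^0(\eta(t))=0$ when $i=N$. In either case the corresponding equation of \eqref{eq:y} forces $\dot y^i(t)=c^i(t)y^{i+1}(t)=0$, respectively $\dot y^N(t)=c^N(t)y^0(\eta(t))=0$, so the associated component carries a double zero at $t$. I would split the rest of the proof along these two cases and treat them by parallel arguments.

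The core is a zero-propagation scheme along the cyclic chain. Starting from the double zero at $t$ --- together with the interior zero of $y^0$ at $\eta(t)$ in Case $i=N$ --- apply Rolle's theorem to the successive links $\dot y^j=c^j y^{j+1}$, using strict positivity of the $c^j$ and of $\delta c^N$ so that each Rolle-zero of $y^j$ sits strictly between consecutive zeros of $y^{j+1}$. Propagating through the chain and then back by one delay step via $\eta$, I would construct a sequence $\tilde\theta_0>\dots>\tilde\theta_{k+1}$ in $[-\tau(\eta^2(t)),0]\cup\{1,\dots,N\}$ at which $y_{\eta^2(t)}$ exhibits $k+1$ consecutive sign flips. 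The specific appearance of $\eta^2$ --- rather than $\eta$ --- is essential here: one extra delay iterate is what is needed to push the zero of $y^0$ at time $\eta(t)$ into the \emph{interior} of the history interval $[\eta^3(t),\eta^2(t)]$ seen at time $\eta^2(t)$, so that it counts as a genuine interior sign change rather than a degenerate endpoint zero.

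The main obstacle I expect is the combined bookkeeping of sign-change \emph{parity} and of coincidental vanishings $y^j(t)=0$ at other indices $j$, which complicate the selection of the $\theta_j$ as in \eqref{eq:theta-choice}. The parity should be controllable because the extra sign change sits at a location whose sign is forced by $\delta$ and by the feedback assumption, matching precisely the ``gap'' that makes $V$ strictly jump; coincidences can typically be handled by applying \cref{re:cut} to isolate subsegments on which a nonzero reference point exists, and by \cref{re:theta} together with \cref{le:V-cont} to shift time slightly without losing the sign structure. The scalar case $N=0$ specializes to the dropping lemma of Krisztin and Arino and serves as a useful sanity check on the parity accounting.
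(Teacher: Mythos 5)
Your reduction to the linear system \eqref{eq:y}, the finiteness assumption, and the use of \cref{thm:V-decreasing} to pass from a nearby time down to $\eta^2(t)$ all match the paper's setup. The genuine gap is in the core mechanism. You propose to produce the extra sign changes by Rolle-type zero propagation along the links $\dot y^j=c^jy^{j+1}$, but Rolle's theorem yields zeros of $y^{j+1}$ at scattered intermediate times inside an interval, whereas what must be controlled is the sign pattern of the single segment $y_{\eta^2(t)}$, i.e.\ the values $y^1(\eta^2(t)),\dots,y^N(\eta^2(t))$ at one fixed time together with $y^0$ on $[\eta^3(t),\eta^2(t)]$. Zeros at intermediate times say nothing about these values; moreover the propagation is lossy (each Rolle step can only lose sign changes, which is why this device is used for the infinite-oscillation \cref{le:iop} but not here), and it degenerates entirely when $y^0$ vanishes identically on a subinterval to the left of $\eta(t)$ --- a case that genuinely occurs and that the paper must treat separately (its subcases 3.1 and 3.2.3, using \cref{th:x_t}\,(iii) and the first nonvanishing integral of $y^0$). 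The paper's actual mechanism is different: it integrates \eqref{eq:y} \emph{backwards over a short time interval} to determine the exact nonzero signs of the components at $t-\eps$ (resp.\ $\eta(t)-\eps$), concluding that $\signc(y_{t-\eps},-\tau(t-\eps))\ge k+2$, or $\ge k+1$ with a parity forced by $\delta$ (odd if $\delta=1$, even if $\delta=-1$), and only then invokes \cref{thm:V-decreasing} to descend to $\eta^2(t)$.

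Two further points. First, your parity bookkeeping is asserted rather than carried out, and it cannot be skipped: exhibiting $k+1$ sign changes of $y_{\eta^2(t)}$ alone gives $V(y_{\eta^2(t)})\ge V(y_t)$, not a strict inequality, whenever $k+1$ already has the parity selected by $V^{\pm}$; one needs either two extra sign changes or one extra sign change whose resulting count has the parity \emph{opposite} to that of $V(y_t)$. Second, your explanation of why $\eta^2$ appears is geometrically off: since $\eta(t)>\eta^2(t)$, the zero of $y^0$ at $\eta(t)$ does not lie in the history interval $[\eta^3(t),\eta^2(t)]$ of $y_{\eta^2(t)}$ at all. The true reason is that in the worst case ($y_t(N)=y^0(\eta(t))=0$ with further coincidental vanishings at time $\eta(t)$) the backwards-$\eps$ argument must be run at the time $\eta(t)$ rather than at $t$, producing the drop at times $\eta(t)-\eps>\eta^2(t)$, and $\eta^2(t)$ is simply far enough back for \cref{thm:V-decreasing} to capture it.
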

	\begin{proof}
		Consider \eqref{def:y}. By \cref{th:x_t} it is enough to show the statement for $y$ instead of $x$. So suppose the conditions of the theorem are satisfied for $y$. 

        If $V(y_{\eta^2(t)},-\tau(\eta^2(t)))=\infty$, then, in light of \cref{thm:V-decreasing}, there is nothing to prove. So we may suppose for the rest of the proof that $V(y_{\eta^2(t)},-\tau(\eta^2(t)))$ is finite. Then $V(y_{s},-\tau(s))$ is also finite for all $s\in [\eta^2(t),t]$ by \cref{thm:V-decreasing}. 
        
        Let us assume $N\geq 2$ and $V(y_t,-\tau(t))<\infty$. We will frequently use -- without further mention -- that the functions $c^i$ ($0\leq i\leq N-1)$ and $\delta c^N$ related to system \eqref{eq:y} are strictly positive. For the sake of concreteness, let $k=\signc(y_t,-\tau(t))$ and $\theta_0,\dots, \theta_k$ be chosen, as in \eqref{eq:theta-choice}.
        
        There are several cases and subcases to study. 
        \medskip
        
		\noindent \textbf{Case 1.} If there exists $i \in \{0,\dots, N-2\}$ with $y_t(i)=y_t(i+1)=0$, and $y_t(i+2)\ne 0$, then, for all $\eps\in(0,t-\eta(t))$ sufficiently small we have $y_{t-\eps}(i+2)y_t(i+2)>0$ and $y_{t-\eps}(\theta_l)y_t(\theta_l)>0$ for all $l\in \{0,\dots, k\}$ (cf.~\cref{re:theta}). By integrating the $i+1$-th and the $i$-th equation of \eqref{eq:y} backwards, we obtain $y_{t-\eps}(i+1)y_{t-\eps}(i+2)<0$ and $y_{t-\eps}(i)y_{t-\eps}(i+1)<0$. Hence $\signc(y_{t-\eps},-\tau(t-\eps))\ge\signc(y_t,-\tau(t))+2$ and, by  \cref{thm:V-decreasing},
        \begin{align*}
            V(y_{\eta^2(t)},-\tau(\eta^2(t)))&\ge V(y_{\eta(t)},-\tau(\eta(t)))\ge V(y_{t-\eps},-\tau(t-\eps))\\
            &\ge V(y_t,-\tau(t))+2>V(y_t,-\tau(t)).
        \end{align*}

        \medskip
	\noindent \textbf{Case 2.} If $i=N-1$, $y_t(N-1)=y_t(N)=0$, and $y_t(N+1)=y_t(-\tau(t))\ne 0$, then using hypothesis \ref{delay-prop} and integrating equation \eqref{eq:y} backwards, there exists $\eps>0$ small enough such that $\delta y_{t-\eps}(N)y_{t-\eps}(-\tau(t-\eps))<0$ and $y_{t-\eps}(N-1)y_{t-\eps}(N)<0$. On the other hand, in view of \cref{re:theta}, it can also be achieved that $y_{t-\eps}(\theta_l)y_t(\theta_l)>0$ holds for all $l\in \{0,\dots, k\}$. Thus $\signc(y_{t-\eps},-\tau(t-\eps))\ge\signc(y_t,-\tau(t))+1$. If $\delta=1$, then $\signc(y_{t-\eps},-\tau(t-\eps))$ is odd, and if $\delta=-1$, then $\signc(y_{t-\eps},-\tau(t-\eps))$ is even. Exploiting \cref{thm:V-decreasing} and the definition of $V$ we obtain in both cases
        $$V(y_{\eta^2(t)},-\tau(\eta^2(t)))\ge V(y_{t-\eps},-\tau(t-\eps))>V(y_t,-\tau(t)),$$
    as desired.
		
	\medskip
	\noindent \textbf{Case	3.} Assume that $i=N$, $y_t(N)=y_t(N+1)=y_{\eta(t)}(0)=y_t(-\tau(t))=0$. This case has two subcases.
		
		
		3.1 Assume that $y^0(s)=0$ for all $s\in[\eta^2(t),\eta(t)]$. By system \eqref{eq:y}, $y^1(s)=y^2(s)=\dots=y^N(s)=0$ follows for all $s\in[\eta^2(t),\eta(t)]$, and hence $y_{\eta(t)}=0$, but then \cref{th:x_t}\,\ref{0-is-invariant} implies $y_t=0$, contradicting the assumptions of the theorem, so this case can be excluded.
		
		3.2. Otherwise, there exist nonnegative $\eps_1$ and positive $\eps_2$ such that $\eps_1+\eps_2<\tau(\eta(t))$, and
        \begin{gather}\label{eq:y0-const-0}
            y^0(s)=0\quad \text{for } s\in[\eta(t)-\eps_1,\eta(t)],\\
		    \int_{\eta(t)-\eps_1-\eps}^{\eta(t)-\eps_1}y^0(s)\,\intd s\ne0\quad \text{and}\quad 
            \signc(y^0, [\eta(t)-\eps_1-\eps,\eta(t)-\eps_1])=0\quad \forall\eps\in[0,\eps_2].\label{eq:int_y_0}
	\end{gather}
		Let $j\in\{0,\dots,N\}$ be such that $y_{\eta(t)}(0)=\dots=y_{\eta(t)}(j)=0$ and $y_{\eta(t)}(j+1)\ne0$, if there is such. We distinguish three further subcases: $j=0$, $j\in \{1,\dots,N\}$, or there is no such $j$.
		
		3.2.1. Case $j=0$. Then $y_{\eta(t)}(1)=y^1(\eta(t))\ne0$, and by system \eqref{eq:y}, $\eps_1=0$ follows, moreover $y^0$ has a simple zero and a sign change at $\eta(t)$. Taking into account hypothesis \ref{delay-prop}, condition \eqref{eq:int_y_0}, and \cref{re:theta}, $\signc(y_{t-\eps},-\tau(t-\eps))>\signc(y_t,-\tau(t))$ holds for small positive $\eps$. Recall that $y_t(N)=0$, so analogously to case 2, if $\delta=1$, then by integrating equation \eqref{eq:y} backwards, we infer $y_{t-\eps}(N)y_{t-\eps}(-\tau(t-\eps))<0$, or in other words, $\signc(y_{t-\eps},-\tau(t-\eps))$ is odd; similarly, if $\delta=-1$, then $\signc(y_{t-\eps},-\tau(t-\eps))$ is even. In both cases 
        we have
        $$V(y_{\eta^2(t)},-\tau(\eta^2(t)))\ge V(y_{t-\eps},-\tau(t-\eps))>V(y_t,-\tau(t)),$$
		for sufficiently small $\eps>0$, hence and the statement holds.
        
		3.2.2. Case $j\in\{1,\dots,N\}$. Then $y_{\eta(t)}(0)=y_{\eta(t)}(1)=\dots=y_{\eta(t)}(j)=0$ and $y_{\eta(t)}(j+1)\ne0$. 
        Apply case 1 (if $j\leq N-1)$), or case 2 (if $j=N$) with $\eta(t)$ in place of $t$. By virtue of \cref{thm:V-decreasing} we obtain that for $\eps>0$ small enough one has
        \begin{equation*}
			V(y_{\eta^2(t)},-\tau(\eta^2(t)))\ge V(y_{\eta(t)-\eps},-\tau(\eta(t)-\eps)) > V(y_{\eta(t)},-\tau(\eta(t))) \ge V(y_t,-\tau(t)).
		\end{equation*}

		
		3.2.3. Suppose that there is no such $j$, i.e. 
        $$y_{\eta(t)}(0)=\dots=y_{\eta(t)}(N)=y_{\eta(t)}(N+1)=y_{\eta(t)}(-\tau(\eta(t)))=0$$
        holds, which together with equations \eqref{eq:y} and  \eqref{eq:y0-const-0} yield that 
		$$y^0(s)=y^1(s)=\dots=y^N(s)=y^{N+1}(s)=y^0(\eta(s))=0,\quad \text{for all } s\in[\eta(t)-\eps_1,\eta(t)]$$
		holds as well. By virtue of equations \eqref{eq:y} and \eqref{eq:int_y_0}, inequality $y^l(s)y^{l+1}(s)<0$ holds for all $s\in [\eta(t)-\eps_1-\eps,\eta(t)-\eps_1]$ and $l\in\{0,\dots,N-1\}$, if $\eps>0$ is sufficiently small and thus
        \begin{align*}
		V(y_{\eta^2(t)},-\tau(\eta^2(t)))&\ge V(y_{\eta(t-\eps_1-                  \eps)},-\tau(\eta(t-\eps_1-\eps)))\ge V(y_{\eta(t-       \eps_1)}\eta(t-\eps_1))+2\\
            &>V(y_{\eta(t-\eps_1)},-\tau(\eta(t-\eps_1)))\ge V(y_t,-\tau(t)).
	\end{align*}
    This completes the proof for $N\ge 2$.
\medskip

    If $N=1$, then we can exclude case 1, otherwise the proof remains unchanged.
\medskip

    Finally, if $N=0$, then, practically, we only have cases 3.2.1 and 3.2.3. The argument of the proof still holds with straightforward changes in notation. We also note that this case (with negative feedback, i.e.\ $\delta=-1$) is proved in \cite[Lemma 4.2\,(ii)]{krisztin:arino:01}.
    \end{proof}

The next theorem has the consequence that a solution of \eqref{eq:x_unidir} with finitely many sign changes on its initial segment will eventually have its segments in the corresponding $\cR_{-\tau(t)}$ sets, where $V$ is continuous, as shown in \cref{le:V-cont}.

\begin{theorem}\label{thm:solutions-regularize}
    Assume that $x$ is a solution of system \eqref{eq:x_unidir} on $I$. Furthermore, suppose $x_t\ne0$ for some $t\in  I$ with $\eta^3(t)\in I$ and $V(x_{\eta^3(t)},-\tau(\eta^3(t)))=V(x_t,-\tau(t))<\infty$. Then $x_t\in \cR_{-\tau(t)}$ holds.
\end{theorem}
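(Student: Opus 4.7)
By \cref{th:x_t}\,\ref{x-and-y-in-R} it suffices to prove $y_t \in \cR_{-\tau(t)}$, where $y$ is the transformed linear solution defined by \eqref{def:y}. The hypothesis together with \cref{thm:V-decreasing} implies that $V(y_s,-\tau(s))$ is constant and finite on $[\eta^3(t),t]$.

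I would argue by contradiction and assume $y_t \notin \cR_{-\tau(t)}$, so that at least one of the defining conditions $\cS^*_{-\tau(t)}$, $\cS^0,\dots,\cS^{N-1}$, $\cS^N_{-\tau(t)}$, $\cS_{-\tau(t)}$ fails. The uniform goal is to exhibit either (a) a time $s^* \in [\eta(t),t]$ and an index $i\in\{0,\dots,N\}$ with $y_{s^*}(i)=y_{s^*}(i+1)=0$, so that \cref{thm:V-dropping} applied at $s^*$ gives $V(y_{\eta^2(s^*)},-\tau(\eta^2(s^*)))>V(y_{s^*},-\tau(s^*))$---contradicting constancy because $\eta$ is strictly increasing and hence $\eta^2(s^*)\in[\eta^3(t),\eta^2(t)]\subset[\eta^3(t),t]$---or (b) some small $\eps>0$ with $\signc(y_{t-\eps},-\tau(t-\eps))\ge \signc(y_t,-\tau(t))+2$; since $V$ takes values of fixed parity (even if $\delta=+1$, odd if $\delta=-1$), an even jump of $\signc$ lifts to an equal jump of $V$, again contradicting constancy.

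Using the linear structure $\dot y^i(t)=c^i(t)y^{i+1}(t)$ (with the analogous equation for $i=N$), most failures are handled by option (a). Failure of $\cS^*_{-\tau(t)}$ at a zero $t+s\in[\eta(t),t]$ yields $y^1(t+s)=0$ from $\dot y^0(t+s)=0$, producing the double zero $y_{t+s}(0)=y_{t+s}(1)=0$. Failure of $\cS^0$ gives $\dot y^0(t)y^1(t)=c^0(t)(y^1(t))^2\le 0$, forcing $y^1(t)=0$ and the double zero $y_t(0)=y_t(1)=0$. In failures of $\cS^i$ ($1\le i\le N-1$), $\cS^N_{-\tau(t)}$, or $\cS_{-\tau(t)}$ where one of the flanking values is zero, a double zero appears at $t$ or at $\eta(t)$. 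The remaining sub-cases---failure of $\cS^i$ with nonzero same-signed neighbors, and failures of $\cS^N_{-\tau(t)}$ or $\cS_{-\tau(t)}$ for $\delta=+1$---are handled by option (b): short backward integration of \eqref{eq:y} produces locally a $+,-,+$ or $-,+,-$ sign pattern at $y_{t-\eps}$ where $y_t$ shows $+,0,+$ or $-,0,-$, yielding two new sign changes (and the remaining sign-change points identified via \cref{re:theta} are preserved).

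The delicate remaining sub-case is $\cS^N_{-\tau(t)}$ (and symmetrically $\cS_{-\tau(t)}$) failing with $\delta=-1$ and strictly opposite-signed nonzero neighbors. Direct backward integration of the $N$-th equation gives only \emph{one} new sign change across the boundary between index $N-1$ and $N$; to extract a second one I would exploit that the opposite-sign pattern, combined with the equation for $\dot y^0$, forces $y^0$ to interpolate between the nonzero values $y^0(\eta(t))$ and $y^0(t)$ through at least one sign change inside $(\eta(t),t)$, which contributes an additional change to $\signc(y_{t-\eps},-\tau(t-\eps))$ that is parity-hidden in $\signc(y_t,-\tau(t))$. I expect this last parity bookkeeping---simultaneously tracking the boundary zero at index $N$, the zeros of $y^0$ inside the delay interval, and the $\delta$-dependent adjustment built into the definition of $V$---to be the main obstacle.
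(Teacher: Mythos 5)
Your overall architecture --- reduce to $y$, argue by contradiction, and dispose of each failed membership either by producing a double zero (so that \cref{thm:V-dropping} applies) or by showing $\signc$ increases backward in time --- is the same as the paper's, and your treatment of $\cS^*_{-\tau(t)}$, $\cS^0$ and the interior conditions $\cS^i$ is correct. The gap is in the boundary conditions $\cS^N_{-\tau(t)}$ and $\cS_{-\tau(t)}$ with nonzero, strictly violating neighbors. You claim that for $\delta=+1$ these are handled by mechanism (b), i.e.\ that a local pattern $+,0,+\mapsto +,-,+$ yields \emph{two} new sign changes. But the three points involved are $N-1$, $N$ and $N+1=-\tau(t)$, and $-\tau(t)$ is the \emph{leftmost} point of the ordered set $[-\tau(t),0]\cup\{1,\dots,N\}$ on which $\signc$ is computed, while $N$ is the rightmost. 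The pair $(N,-\tau(t))$ is therefore not an adjacent pair for $\signc$; its ``sign change'' is the cyclic wrap-around, which $\signc$ never counts. Backward integration of the $N$-th equation of \eqref{eq:y} thus produces exactly \emph{one} new sign change (between $N-1$ and $N$), for either sign of $\delta$, and a jump of $\signc$ by $1$ alone gives only $V(y_{t-\eps},\cdot)\ge\signc(y_t,\cdot)+1\ge V(y_t,\cdot)$ --- no contradiction.

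The missing ingredient --- which is precisely the paper's argument, and which you gesture at but assign to the wrong subcase --- is that the backward integration also yields $\delta\,y_{t-\eps}(N)\,y_{t-\eps}(-\tau(t-\eps))<0$. Since the parity of $\signc$ over the linearly ordered domain is determined by the signs at its two endpoints, this forces $\signc(y_{t-\eps},-\tau(t-\eps))$ to be odd when $\delta=1$ and even when $\delta=-1$ --- exactly the parity that the definition of $V^{\pm}$ rounds \emph{up}, so $V(y_{t-\eps},\cdot)=\signc(y_{t-\eps},\cdot)+1\ge\signc(y_t,\cdot)+2>V(y_t,\cdot)$, contradicting \cref{thm:V-decreasing}. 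This works uniformly for $\delta=\pm1$; there is no genuinely harder $\delta=-1$ subcase. Your proposed repair for $\delta=-1$ --- extracting an extra sign change of $y^0$ inside $(\eta(t),t)$ --- cannot work: any such sign change is already present in $y_t$ and counted in $\signc(y_t,-\tau(t))$, so it contributes nothing new at time $t-\eps$. The same one-new-sign-change-plus-parity argument also settles the nondegenerate failure of $\cS_{-\tau(t)}$, where the new sign change is the simple zero of $y^0$ at $\eta(t)$ becoming fully visible as the left endpoint of the domain moves from $\eta(t)$ to $\eta(t-\eps)$.
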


\begin{proof}
    Consider \eqref{def:y}. By \cref{th:x_t}, it is enough to show the statement for $y$ instead of $x$. Some of the following steps can be skipped if $N=1$ or $N=0$ (see the definition of~$\cR_a$). In the latter case, other minor straightforward modifications have to be made. We note that this case (with $\delta=-1$) has already been proved in \cite[Lemma~4.2\,(iii)]{krisztin:arino:01}. 

        Note that from $\eta^3(t)\in I$, and equation \eqref{eq:y} it follows that $y_t\in \CK^1$.
        
		Indirectly, assume that $y_t\not\in \cS^*_{-\tau(t)}$. Then there exists $s\in[-\tau(t),0]$ such that $y_t(s)=\dot y_t(s)=0$ or, equivalently, $y_{t+s}(0)=\dot y_{t+s}(0)=0$. Then, by system \eqref{eq:y}, $y_{t+s}(1)=0$, and, using \cref{thm:V-dropping} we infer the inequalities
        $$V(y_{\eta^3(t)},-\tau(\eta^3(t)))\ge V(y_{\eta^2(t+s)},-\tau(\eta^2(t+s)))>V(y_{t+s},-\tau(t+s))\ge V(y_t,-\tau(t)),$$
        a contradiction, hence $y_t\in \cS^*_{-\tau(t)}$.
		
		Now assume that $y_t(0)=0$. From the previous argument, we obtain $\dot y_t(0)\ne0$, and by system \eqref{eq:y}, $\dot y_t(0)y_t(1)>0$, we arrive at $y_t\in \cS^0$.
		
		Let $1\le i\le N-1$, $y_t(i)=0$ and $y_t(i-1)y_t(i+1)\ge0$. If equality holds, then from \cref{thm:V-dropping} we get a contradiction. Otherwise, by system \eqref{eq:y}, $\dot y_t(i)y_t(i+1)>0$, implying $y_{t-\eps}(i-1)y_{t-\eps}(i)<0$ and $y_{t-\eps}(i)y_{t-\eps}(i+1)<0$ for $\eps\in(0,t-\eta^3(t))$ small enough. Applying  \cref{re:theta}, we obtain $\signc(y_{t-\eps},-\tau(t-\eps))\ge\signc(y_t,-\tau(t))+2$, and
        \begin{align}
            V(y_{\eta^3(t)},-\tau(\eta^3(t)))\ge V(y_{t-\eps},-\tau(t-\eps))\ge V(y_t,-\tau(t))+2>V(y_t,-\tau(t)),\label{eq:V}
        \end{align}
        a contradiction, hence $y_t\in \cS^i$.
		
		Now assume indirectly that $y_t\notin \cS^N_{-\tau(t)}$, i.e.\ $y_t(N)=0$, and $\delta y_t(N-1)y_t(N+1)\ge0$. Equality contradicts \cref{thm:V-dropping}, otherwise $\delta y_t(N-1)y_t(-\tau(t))>0$ holds. By system \eqref{eq:y}, we have $\delta\dot y_t(N)y_t(-\tau(t))>0$, implying $\delta y_{t-\eps}(N)y_{t-\eps}(-\tau(t-\eps))<0$ and $y_{t-\eps}(N-1)y_{t-\eps}(N)<0$ for $\eps\in(0,t-\eta^3(t))$ small enough. Applying \cref{re:theta}, we obtain $\signc(y_{t-\eps},-\tau(t-\eps))>\signc(y_t,-\tau(t))$, and if $\delta=1$ then $\signc(y_{t-\eps},-\tau(t-\eps))$ is odd, if $\delta=-1$ then $\signc(y_{t-\eps},-\tau(t-\eps))$ is even. In both cases, inequality \eqref{eq:V} holds, which is a contradiction, hence $y_t\in \cS^N_{-\tau(t)}$.
		
		Finally, assume to the contrary that $y_t(-\tau(t))=0$ and $\delta y_t(N)\dot y_t(-\tau(t))\ge0$. In light of \cref{thm:V-dropping}, equality cannot hold here. Hence, $y^0$ has a sign change at $\eta(t)$, and so \linebreak $\signc(y_{t-\eps},-\tau(t-\eps))>\signc(y_t,-\tau(t))$ for $\eps\in(0,t-\eta^3(t))$ small enough. Similarly to the previous argument, if $\delta=1$, then $\signc(y_{t-\eps},-\tau(t-\eps))$ is odd, if $\delta=-1$, then $\signc(y_{t-\eps},-\tau(t-\eps))$ is even. In both cases, inequality \eqref{eq:V} holds, which is a contradiction, hence $y_t\in \cS_{-\tau(t)}$.
		
		In summary, we have $y_t\in \cR_{-\tau(t)}$, and the proof is complete.
	\end{proof}
	

	
	\subsection{Finiteness of \texorpdfstring{$\bm{V}$}{V}}\label{section:finiteness}\label{sec:V-finite}
	 
     In this subsection we show that under some additional conditions on $f$ and $\eta$, $V$ is finite along backwards-bounded entire solutions, or in other words, any component of such solutions have finitely many sign changes on finite intervals. We have the opposite scenario, when there exists a so-called infinite oscillation point, defined below.
		
	\begin{definition}[\cite{mallet-paret:sell:96-lyapunov}]
		Let $x$ be a solution of system \eqref{eq:x_unidir} on an interval $I$. We say that $\sigma\in I$ is an \textit{infinite oscillation point} (IOP) if there exist a strictly monotone sequence $t_n\to\sigma$ such that $x^0(t_n)x^0(t_{n+1})<0$.
	\end{definition}
	
	\begin{lemma}
		Assume that $x$ is a solution of \eqref{eq:x_unidir} on an interval $I$ and $\sigma$ is an IOP satisfying $\sigma>\inf I$. Then $x(\sigma)=0\in \R^{N+1}$. Moreover, if $\eta(\sigma)\in I$, then $\eta(\sigma)$ is also an IOP.\label{le:iop}
	\end{lemma}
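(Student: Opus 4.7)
The plan is to first pass to the auxiliary linear system \eqref{eq:y} via the transformation \eqref{def:y}: by \cref{th:x_t}\,\ref{sgnx=sgny} the sign pattern of $x^0$ coincides with that of $y^0$, so $\sigma$ is an IOP for $x$ iff it is one for $y$, and since each $y^i$ and $x^i$ share zeros it suffices to verify both conclusions for the solution $y$ of \eqref{eq:y}. The payoff is the clean structure $\dot y^i = c^i y^{i+1}$ for $0\le i\le N-1$ and $\dot y^N = c^N(y^0\circ\eta)$ with $c^i>0$ and $\delta c^N>0$ on $I$, which makes the Rolle-type arguments below transparent.

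For the first assertion I would argue by cyclic induction along the chain. The IOP definition together with the intermediate value theorem produces a sequence of zeros of $y^0$ accumulating at $\sigma$, so $y^0(\sigma)=0$ by continuity. Inductively, if $y^i$ has infinitely many zeros accumulating at $\sigma$, then Rolle's theorem applied between consecutive such zeros yields zeros of $\dot y^i$ accumulating at $\sigma$; dividing by the sign-definite coefficient $c^i$ (respectively $c^N$ in the last step, where one picks up a zero of $y^0\circ\eta$) converts these into zeros of $y^{i+1}$ accumulating at $\sigma$, whence $y^{i+1}(\sigma)=0$ by continuity. Running the induction through $i=0,1,\dots,N-1$ gives $y(\sigma)=0\in\R^{N+1}$, and therefore $x(\sigma)=0$.

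For the second assertion I strengthen the inductive hypothesis to ``$y^i$ has infinitely many sign changes accumulating at $\sigma$''. The base case $i=0$ is exactly the IOP hypothesis. For the inductive step, take two consecutive sign changes $\zeta_n<\zeta_{n+1}$ of $y^i$; then $y^i(\zeta_n)=y^i(\zeta_{n+1})=0$ and $y^i$ has constant nonzero sign on $(\zeta_n,\zeta_{n+1})$, so it attains a strict extremum at some interior point $\xi_n$, where $\dot y^i(\xi_n)=0$ and, by the mean value theorem, $\dot y^i$ takes strictly opposite signs on the two subintervals determined by $\xi_n$. Dividing by the sign-definite $c^i$ yields a sign change of $y^{i+1}$ in $(\zeta_n,\zeta_{n+1})$, and these accumulate at $\sigma$. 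Pushing this argument to $i=N$ produces sign changes $\tilde\xi_n\to\sigma$ of $\dot y^N$, and hence of $y^0\circ\eta$ because $\delta c^N>0$. Since $\eta$ is a continuous strictly increasing map by \ref{delay-prop}, the images $\eta(\tilde\xi_n)\to\eta(\sigma)$ are sign changes of $y^0$ itself, exhibiting $\eta(\sigma)\in I$ as an IOP.

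The principal obstacle is the upgrade from ``infinitely many zeros'' to ``infinitely many sign changes'' within the inductive step: mere zeros of $y^i$ need not descend to sign changes of $y^{i+1}$ (for instance, if $y^i$ only touches zero from one side, the corresponding extremum of $y^i$ produces only an isolated zero of the derivative), so one must initiate the induction with genuine sign changes and argue carefully that the extremum between two adjacent sign-change points of $y^i$ really produces a sign change of $\dot y^i$, not merely a zero. Once this point is set up, propagating through the delayed equation $\dot y^N=c^N(y^0\circ\eta)$ and then across the strictly increasing $\eta$ in the last step is routine.
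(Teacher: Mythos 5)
Your proof is correct and follows essentially the same route as the paper: reduce to the linear system \eqref{eq:y}, propagate genuine sign changes (not merely zeros) along the chain $y^0\to y^1\to\dots\to y^N\to y^0\circ\eta$ via the mean value theorem, and use the monotonicity and continuity of $\eta$ in the final step. The only real difference is that the paper applies the mean value theorem directly to the alternating-sign points $t^i_n$ of the inductive hypothesis (the difference quotient inherits alternating signs because the values alternate and the sequence is monotone), which sidesteps your detour through zeros and extrema --- where the phrase ``$y^i$ has constant nonzero sign on $(\zeta_n,\zeta_{n+1})$'' needs the caveat that the zero set between two alternating-sign points may contain whole intervals, so $\zeta_n,\zeta_{n+1}$ must be taken as the endpoints of a maximal interval of constant nonzero sign (after which your extremum argument goes through).
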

	\begin{proof}
		As $x^i(t)$ has the same sign as $y^i(t)$ defined by \eqref{def:y}, it is enough to prove the statement for the solution of \eqref{eq:y}. Let us assume that $\sigma$ is an IOP of $y$. Then, by continuity, $y^0(\sigma)=0$, and there exists a monotone sequence $(t^0_n)_{n=0}^\infty$ such that $t^0_n\to\sigma$ and $y^0(t^0_n)y^0(t^0_{n+1})<0$. Let us assume that $0\le i<N$ and there exists a monotone sequence $(t^i_n)_{n=0}^\infty$ such that $t^i_n\to\sigma$ and $y^i(t^i_n)y^i(t^i_{n+1})<0$. By the mean value theorem there exists a monotone sequence $(t^{i+1}_n)_{n=0}^\infty$ such that $t^{i+1}_n\in(t^i_n,t^i_{n+1})$, $t^{i+1}_n\to\sigma$ and $\dot y^i(t^{i+1}_n)\dot y^i(t^{i+1}_{n+1})<0$ which together with \eqref{eq:y} imply $y^{i+1}(t^{i+1}_n)y^{i+1}(t^{i+1}_{n+1})<0$ and $y^{i+1}(\sigma)=0$. By induction, we obtain that for all $i\in\{0,\dots, N\}$ there exists a monotone sequence $(t^i_n)_{n=0}^\infty$ such that $t^i_n\to\sigma$, $y^i(t^i_n)y^i(t^i_{n+1})<0$ and $y^i(\sigma)=0$. Again, by the mean value theorem, there exists a monotone sequence $(t^{N+1}_n)_{n=0}^\infty$ such that $t^{N+1}_n\to\sigma$ and $\dot y^N(t^{N+1}_n)\dot y^N(t^{N+1}_{n+1})<0$ implying $y^0(\eta(t^{N+1}_n))y^0(\eta(t^{N+1}_{n+1}))<0$ and $y^0(\eta(\sigma))=0$. Using the monotonicity and continuity of~$\eta$, $(\eta(t^{N+1}_n))_{n=0}^\infty$ is also a monotone sequence such that $\eta(t^{N+1}_n)\to\eta(\sigma)$ as $n\to\infty$, hence $\eta(\sigma)$ is also an IOP.
	\end{proof}

    To establish finiteness of $V$, we will assume in addition to \ref{f-cont}--\ref{delay-prop} the following two hypotheses.
	
    \begin{enumerate}[label=(H$_\arabic*$)]
        \setcounter{enumi}{3}
	    \item \label{f-lin-bounded} For any bounded set $B\subset \R^2$ there exists a constant $C=C(B)>0$ such that
		\begin{align*}
			|f^i(t,u,v)|&\le C(|u|+|v|),\quad i=0,\dots,N,\quad\text{for all } t\in \R \text{ and } (u,v)\in B.
		\end{align*}
        \item There exists a positive constant $L$ such that the iterates $\eta^k$ are $L$-Lipschitz continuous for all $k\in \N$. \label{eta-iterates-Lip}
	\end{enumerate}	
    
	\begin{remark}
		If \eqref{eq:x_unidir} is autonomous, hypothesis \ref{f-cont} implies that \ref{f-lin-bounded} holds.
	\end{remark}

    The next theorem combined with \cref{le:iop} proves the finiteness of $V$ along backwards-bounded entire solutions of equation \eqref{eq:x_unidir}.
    
    \begin{theorem}\label{th:x_0}
	    Suppose that hypotheses \emph{\ref{f-cont}--\ref{eta-iterates-Lip}} hold and $x$ is a backwards-bounded entire solution of \eqref{eq:x_unidir}.
        \begin{enumerate}[label=(\roman*)]
        \item If there exists $\sigma\in\R$ such that $x(\eta^k(\sigma))=0\in \R^{N+1}$ for all $k\in\N_0$, then $x(t)=0$ for all $t\in\R$. 
        \item If $x$ is nontrivial, then $V(x_t,-\tau(t))$ is finite for all $t\in \R$.\label{statement:V-finite}
        \end{enumerate}
    \end{theorem}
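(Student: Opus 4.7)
I would prove part (i) first and deduce part (ii) from it using \cref{le:iop}.

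\textbf{Part (i).} Set $t_k:=\eta^k(\sigma)$. Because $\eta$ is strictly increasing with $\eta(t)<t$ (by \ref{delay-prop}), any finite limit of $(t_k)$ would be a fixed point of $\eta$, which is impossible; hence $t_k\to-\infty$. The plan is to prove $x\equiv 0$ on $(-\infty,\sigma]$ first, and then propagate this forward.

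For the backward statement, backwards-boundedness gives some $R$ with $|x(t)|\le R$ on $(-\infty,\sigma]$, so by \ref{f-lin-bounded} there is $C=C(R)$ with
$$|\dot x^i(t)|\le C(|x^i(t)|+|x^{i+1}(t)|),\quad 0\le i\le N-1,$$
and $|\dot x^N(t)|\le C(|x^N(t)|+|x^0(\eta(t))|)$. Combining this with $x(t_k)=0$ for all $k$ and integrating from whichever endpoint of $[t_{k+1},t_k]$ is nearer to $t$ yields, after chaining through the $N+1$ indices of the feedback loop, a recursive estimate of the shape
$$\sup_{[t_{k+1},t_k]}|x^0|\le \lambda_k\sup_{[t_{k+2},t_{k+1}]}|x^0|.$$
Iterating this inequality over $k\to\infty$, with backwards-boundedness supplying the tail input, forces each supremum to zero, giving $x^0\equiv 0$ on $(-\infty,\sigma]$; componentwise bounds then propagate to $x\equiv 0$ on $(-\infty,\sigma]$. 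Hypothesis \ref{eta-iterates-Lip} is the ingredient that must keep the contraction factors $\lambda_k$ under uniform control, since a naive single-interval Gronwall bound would carry a factor $C\tau(t_k)$ that need not be small.

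For the forward extension, the feedback \ref{f-feedback} together with continuity in the second variable forces $f^i(t,0,0)=0$, so $x\equiv 0$ solves \eqref{eq:x_unidir} identically. For $t\in(\sigma,\eta^{-1}(\sigma)]$ the delayed argument $\eta(t)$ lies in $(-\infty,\sigma]$, where $x^0\equiv 0$; hence \eqref{eq:x_unidir} reduces to an ODE in $(x^0,\dots,x^N)$ with zero initial condition at $t=\sigma$, whose unique solution (via local Lipschitzness from \ref{f-cont}) is zero. Iterating, the sequence $\sigma_n:=\eta^{-n}(\sigma)$ cannot have a finite limit (again by impossibility of a fixed point of $\eta$), so $\sigma_n\to\infty$ and $x\equiv 0$ on all of $\R$.

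\textbf{Part (ii).} Suppose for contradiction that $x$ is nontrivial but $V(x_{t_0},-\tau(t_0))=\infty$ for some $t_0\in\R$. By \cref{thm:V-decreasing}, $V(x_t,-\tau(t))=\infty$ for all $t\le t_0$. Since $\{1,\dots,N\}$ contributes at most $N$ sign changes, $x^0$ must have infinitely many sign changes on $[\eta(t_0),t_0]$. Extracting a strictly monotone sequence of consecutive sign-change locations and invoking Bolzano--Weierstrass produces a limit $\sigma^*\in[\eta(t_0),t_0]$, which is an IOP in the sense of the definition. \cref{le:iop} gives $x(\sigma^*)=0$ and that $\eta(\sigma^*)$ is also an IOP; iterating yields that $\eta^k(\sigma^*)$ is an IOP with $x(\eta^k(\sigma^*))=0$ for every $k\in\N_0$. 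Part (i) now forces $x\equiv 0$, contradicting nontriviality.

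\textbf{Main obstacle.} The delicate step is the backward iteration in part (i). The single-interval estimate carries a factor $C\tau(t_k)$ which can be as large as $Cr$, so one cannot contract through a single interval. Making the aggregated estimate into a genuine contraction without any smallness hypothesis on $Cr$ is precisely where the uniform Lipschitz control of all $\eta$-iterates from \ref{eta-iterates-Lip} must enter, presumably by refining the partition of the time axis so that the cascaded product estimate still collapses.
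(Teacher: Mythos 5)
Part (ii) of your argument and the forward-propagation step in part (i) are essentially the paper's (the paper dispatches the forward direction by \cref{th:x_t}\,(iii) rather than re-running uniqueness, but that is cosmetic). The genuine gap is exactly the one you flag yourself: the backward, interval-by-interval contraction in part (i) cannot be closed as described. The one-interval Gronwall estimate gives $\lambda_k$ of order $e^{2CLr}-1$, there is no smallness hypothesis anywhere in \ref{f-cont}--\ref{eta-iterates-Lip} to make this less than $1$, and ``refining the partition'' does not rescue it: $x$ does not vanish at the interior refinement points, so you cannot restart the estimate from a zero initial condition there, and the delayed argument $\eta(s)$ still jumps back a full delay length however finely you cut $[t_{k+1},t_k]$. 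Hypothesis \ref{eta-iterates-Lip} supplies a \emph{uniform Lipschitz constant} for the iterates $\eta^k$, not smallness, so it cannot play the role you assign to it.

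The paper's proof avoids contraction altogether by a reparametrization you are missing. Set $z_k(t)=x(\eta^k(\sigma+t))$ for local time $t\in[0,r]$. Then \emph{all} components vanish at the same local instant, $z_k(0)=x(\eta^k(\sigma))=0$, and the crucial identity $\eta(\eta^k(\sigma+s))=\eta^{k+1}(\sigma+s)$ turns the delayed term of the $N$-th equation at level $k$ into $z_{k+1}^0(s)$ evaluated at the \emph{same} local time $s$ --- the delay disappears from the coupled family. Changing variables via the Stieltjes differential $\intd_s\eta^k(\sigma+s)$, whose total variation is controlled by the uniform $L$ from \ref{eta-iterates-Lip}, one obtains for $Z(t)=(z_0(t),z_1(t),\dots)\in\ell^\infty$ the delay-free inequality
\begin{equation*}
\|Z(t)\|_\infty\le 2CL\int_0^t\|Z(s)\|_\infty\intd s,\qquad t\in[0,r],
\end{equation*}
and the Gronwall--Bellman lemma gives $Z\equiv0$ on $[0,r]$ regardless of the size of $CLr$. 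Since $[\eta(\sigma),\eta(\sigma+r)]\supseteq[\eta(\sigma),\sigma]$, this yields $x_\sigma=0$, and \cref{th:x_t}\,(iii) then gives $x\equiv0$ on $\R$. This simultaneous-vanishing trick is the missing idea; without it (or an equivalent substitute) your part (i), and hence your proof of part (ii), does not go through.
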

	
	\begin{proof}
		(i) Let $\sigma$ such that $x^i(\eta^k(\sigma))=0$ for all $k\in\N_0$ and $i=0,\dots,N$, and define for $t\in [0,r]$, $z_k(t)=x(\eta^k(\sigma+t))$. Observe that $z_k(0)=0$ for all $k\in \N_0$. Let $\intd_s$ denote the Stieltjes integration with respect to $s$. In light of \eqref{eq:x_unidir}, hypotheses \ref{f-lin-bounded}--\ref{eta-iterates-Lip}, and the backwards-boundedness of $x$, we obtain
		\begin{align*}
			\left|z_k^i(t)\right|&=\left|x^i(\eta^k(\sigma+t))\right|=\left|\int_0^tf^i(\eta^k(\sigma+s),x^i(\eta^k(\sigma+s)),x^{i+1}(\eta^k(\sigma+s)))\intd_s\eta^k(\sigma+s)\right|\\
			&\le\int_0^t\left|f^i(\eta^k(\sigma+s),z^i_k(s),z^{i+1}_k(s))\right|\left|\intd_s\eta^k(\sigma+s)\right|\le CL\int_0^t(|z^i_k(s)|+|z^{i+1}_k(s)|)\intd s\\
			\intertext{for $i=0,\dots,N-1$, and}
			\left|z_k^N(t)\right|&=\left|x^N(\eta^k(\sigma+t))\right|\\
			&=\left|\int_0^tf^N(\eta^k(\sigma+s),x^N(\eta^k(\sigma+s)),x^0(\eta^{k+1}(\sigma+s)))\intd_s\eta^k(\sigma+s)\right|\\
			&\le\int_0^t\left|f^N(\eta^k(\sigma+s),z^N_k(s),z^0_{k+1}(s))\right|\left|\intd_s\eta^k(\sigma+s)\right|\\
			&\le CL\int_0^t(|z^N_k(s)|+|z^0_{k+1}(s)|)\intd s.
		\end{align*}
        Let $z_k(t)=(z_k^0(t),\dots,z_k^N(t))$ for all $k\in \N_0$  and $Z(t)=(z_0(t),z_1(t),\dots)\in\ell^\infty$.    
        Since $x$ is a backwards-bounded solution, components $z_k$ of $Z$ are uniformly bounded on $[0,r]$, and thanks to hypothesis \ref{f-lin-bounded}, so are their derivatives. Thus $Z$ is continuous, and inequality
		$$\|Z(t)\|_\infty\le 2CL\int_0^t\|Z(s)\|_\infty\intd s$$
		holds for all $t\in [0,r]$. The Gronwall--Belmann lemma implies $Z(t)=0$ on $[0,r]$, which together with \cref{th:x_t}\,\ref{0-is-invariant} yield $x(t)=0$ for all $t\in \R$.
        \medskip

        (ii) Now assume to the contrary that $x$ is a nontrivial backwards-bounded entire solution of system \eqref{eq:x_unidir} with $V(x_t,-\tau(t))=\infty$ for some $t\in\R$. Then $x$ has an IOP $\sigma\in[\eta(t),t]$. By \cref{le:iop}, $x(\eta^k(\sigma))=0$ for all $k\in\N_0$, but then, by the first part, $x(t)=0$ follows for all $t\in\R$, a contradiction that completes the proof.
	\end{proof}

    Before the last theorem of this section, we recall some notions from the theory of nonautonomous dynamical systems. For a thorough overview of the topic, we refer the readers to the monographs \cite{carvalho:langa:robinson:13, anagnostopoulou:poetzsche:rasmussen:23,kloeden:rasmussen:11}.
\medskip

    Let us assume for the rest of this section that the maximal solutions of initial value problems \eqref{eq:x_unidir} with $x_{t_0}=\varphi\in \CK$ are defined on $[t_0, \infty)$. For example, this is the case if the functions $f^i$  ($i\in \{0,\dots,N\}$) are bounded, but more relaxed conditions on dissipativity can also be given.

    \medskip
    
    Then \eqref{eq:x_unidir} induces a \textit{process} $S(\cdot,\cdot)$ on $\CK$, i.e.~a family of continuous maps $\{S(t,s):\ t\ge s\}$ from $\CK$ to itself with the properties
	\begin{itemize}
		\item $S(t,t)=\id$ for all $t\in\R$,
		\item $S(t,s)=S(t,t_1)S(t_1,s)$ for all $t\ge t_1\ge s$,
		\item $(t,s,\varphi)\mapsto S(t,s)\varphi$ is continuous, $t\ge s$, $\varphi\in\cC_\K$,
		\item $S(t,t_0)\varphi=x_t$, where $x_t$ is the unique solution of \eqref{eq:x_unidir} with initial condition $x_{t_0}=\varphi$.
	\end{itemize}
	The family of sets $\{\A(t)\}_{t\in\R}\subset\cC_\K$ is said to be a \textit{global pullback attractor of $S$}, if
	\begin{itemize}
		\item $\A(t)$ is compact for all $t\in\R$,
		\item $\A(\cdot)$ is \textit{invariant under $S$}, i.e. $S(t,s)\A(s)=\A(t)$ holds for all $t\ge s$, and
		\item $\A(t)$ \textit{pullback attracts} all bounded subsets $B\subset\cC_\K$, i.e.
		$$\lim_{s\to-\infty}\dist(S(t,s)B,\A(t))=0,$$
	\end{itemize}
    where $\dist$ denotes here the Hausdoff semidistance. Finally, we say that a family of bounded sets $\{B(t)\}_{t\in\R}$ is \textit{backwards-bounded}, if $\bigcup_{s\le t}B(s)$ is bounded for all $t\in\R$.
	
	\begin{theorem}\label{thm:V-is-finite-on-A}
	Suppose that hypotheses \emph{\ref{f-cont}--\ref{eta-iterates-Lip}} hold and the process (resp.\ semiflow in the autonomous case) generated by \eqref{eq:x_unidir} has a backwards-bounded pullback attractor $\{\A(t)\}_{t\in\R}$ (resp.\ a global attractor $\A$). Then $V(\varphi)<\infty$ for all $\varphi\in\A(t)\setminus \{0\}$, $t\in\R$ (resp.\ $\varphi \in \A$).
	\end{theorem}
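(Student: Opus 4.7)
The strategy is to show that every nonzero element of the pullback attractor is realized as the time-$t$ segment of a backwards-bounded nontrivial entire solution of \eqref{eq:x_unidir}, after which the conclusion reduces to Theorem~\ref{th:x_0}\,\ref{statement:V-finite}. Interpreting $V(\varphi)$ as $V(\varphi,-\tau(t))$ for $\varphi\in\A(t)$ in the natural way, the task is thus to promote an element of $\A(t)$ to a complete entire trajectory living in the attractor.

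First, I would fix $t\in\R$ and $\varphi\in\A(t)\setminus\{0\}$ and construct an entire solution $x$ of \eqref{eq:x_unidir} with $x_t=\varphi$ such that $x_s\in\A(s)$ for all $s\in\R$. Using invariance $\A(s)=S(s,s-1)\A(s-1)$ repeatedly, define inductively $\varphi_0=\varphi$ and pick, for each $n\ge 1$, some $\varphi_{-n}\in\A(t-n)$ with $S(t-n+1,t-n)\varphi_{-n}=\varphi_{-n+1}$. The cocycle property then gives $S(t,t-n)\varphi_{-n}=\varphi$ for all $n\in\N_0$, and more generally one can define $x_s\coloneq S(s,t-n)\varphi_{-n}$ for any $n$ with $t-n\le s\le t$; again by the cocycle property this is unambiguous and yields a single consistent trajectory on $(-\infty,t]$ that stays in the attractor. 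For $s>t$ one simply sets $x_s=S(s,t)\varphi$, also in $\A(s)$ by invariance.

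Second, I would invoke backwards-boundedness of $\{\A(s)\}_{s\in\R}$: the set $\bigcup_{s\le t}\A(s)$ is bounded in $\CK$, so in particular $\{x_s:s\le t\}$ is bounded, and $x$ is a nontrivial (since $x_t=\varphi\neq 0$) backwards-bounded entire solution of \eqref{eq:x_unidir}. Theorem~\ref{th:x_0}\,\ref{statement:V-finite} then yields $V(x_t,-\tau(t))<\infty$, which is precisely the desired finiteness at $\varphi$. For the autonomous semiflow case the argument is identical, only with the one-parameter invariance $\A=T(h)\A$ ($h\ge 0$) replacing the two-parameter version; here the global attractor is compact hence bounded, so backwards-boundedness of the constructed trajectory is automatic.

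I do not anticipate a serious obstacle here: the construction of a complete trajectory through any point of an invariant set by sequential selection of preimages is a standard argument from nonautonomous attractor theory, and the only care needed is to keep track of the cocycle consistency and to use backwards-boundedness (rather than merely compactness of each fibre $\A(s)$) so that the hypotheses of Theorem~\ref{th:x_0} are truly met. All the hard analytic work — the dropping lemma, the regularisation of segments, and the Gronwall--Bellman argument that forces trivial solutions when $V$ blows up — has already been done upstream.
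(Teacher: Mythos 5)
Your proposal is correct and follows essentially the same route as the paper: reduce to Theorem~\ref{th:x_0}\,\ref{statement:V-finite} by realizing each $\varphi\in\A(t)\setminus\{0\}$ as a segment of a nontrivial backwards-bounded entire solution. The only difference is that the paper simply cites the standard characterization of a backwards-bounded pullback attractor (resp.\ global attractor) as the union of segments of bounded entire solutions, whereas you reprove it via the usual backward selection of preimages and the cocycle property; both are fine.
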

	
	\begin{proof} Recall that a backwards-bounded pullback attractor (resp.\ the global attractor) consists of all functions $\varphi\in \CK$ through which there exists a bounded entire solution \cite[Theorem 1.17]{carvalho:langa:robinson:13}, (resp.~\cite[Lemma 2.18]{raugel:02}). In other words, it consists of solution segments $x_t$ of entire solutions. The statement now follows from \cref{th:x_0}\,\ref{statement:V-finite}.
	\end{proof}

As a corollary of \cref{thm:V-is-finite-on-A,thm:solutions-regularize,thm:V-decreasing}, one obtains that for any bounded entire solution $x$, the segments $x_t$ are all in $\cR_{-\tau(t)}$ for all sufficiently large $|t|$. This fact comes very handy when studying the structure and properties of the attractor.

    \section{Cyclic systems with state-dependent delay}\label{sec:SDDEs}

    Let us consider the system
    \begin{equation}
        \begin{aligned}
            \dot x^i(t)&=f^i(t,x^i(t),x^{i+1}(t)),&\quad& i\in \{0,\dots,N-1\},\\
            \dot x^N(t)&=f^N(t,x^N(t),x^0(t-\tau(t,x_t)))&\quad&
        \end{aligned}\label{eq:sdde}
    \end{equation}
    with state-dependent delay $\tau = \tau(t,\varphi)$ for $t\in\R$ and $\varphi \in \cC_\K$. 

As Walther pointed out in \cite{walther:02}, it seems not obvious how to single out a tractable class of SDDEs which contains a large set of examples that are well motivated. Yet, in what follows, we will give two frequently used classes of state-dependent delays for which the results of \cref{section:main-results} apply. We also mention that hypothesis \ref{delay-prop} can be verified for several other types of state-dependent delays not discussed here, see e.g.\ the scalar models treated in  \cite{arino:hadeler:hbid:98,hu:wu:10,magal:arino:00}. For a general -- but not so recent -- overview on SDDEs we refer the reader to \cite{hartung:krisztin:walther:wu:06}.

\subsection{Threshold-type delay}
	Threshold-type delays arise in many applications, see e.g.~\cite{balazs:getto:rost:21,gedeon:et-al:arxiv:2024,waltman:74,smith:83} and references therein.

        Consider the system \eqref{eq:sdde}, where $\tau\colon \CK\to \R$ is defined by
        \begin{align}
            \int_{-\tau(\varphi)}^0a(\varphi(s))\intd s=1,\label{eq:thr_delay}
        \end{align}
        with $a\colon\R\to[a_{\min},a_{\max}]$ continuous, $0<a_{\min}<a_{\max}$. Clearly, $\tau(\varphi)$ is well-defined, and bounded by $r\coloneqq {1}/{a_{\min}}$.

        Under suitable dissipativity conditions on $f$ and by restricting the phase space for $L_0$-Lipschitz continuous functions with some $L_0>0$, we may guarantee the existence of a unique solution that exists forward in time for all $t$ and initial function $\varphi$. We refer to \cite{bartha:garab:krisztin:25} for more details, where the scalar case was dealt with. For our purposes, it suffices to assume that we have a solution $x$ of system \eqref{eq:sdde}--\eqref{eq:thr_delay} on an interval $I$.

        Assume, as always, that hypothesis \ref{f-cont}--\ref{f-feedback} hold. For a fixed solution $x$ of system \eqref{eq:sdde}--\eqref{eq:thr_delay} on an interval $I$, define $\eta(t)=t-\tau(x_t)$. Let $k\in\N$ and $h>0$ be fixed. Then for any $t\in I$, if $t+h$ and $\eta^k (t)$ are also in $I$, then  we obtain
        \begin{equation*}
            \int_{\eta^k(t)}^ta(x^0(s))\intd s=k=\int_{\eta^k(t+h)}^{t+h}a(x^0(s))\intd s.
        \end{equation*}
        Subtracting $\int_{\eta^k(t+h)}^ta(x^0(s))\intd s$, we arrive at
        \[            \int_{\eta^k(t)}^{\eta^k(t+h)}a(x^0(s))\intd s=\int_t^{t+h}a(x^0(s))\intd s>0,\]
        hence $\eta^k$ is strictly increasing, furthermore, inequalities
       \begin{align*}\int_{\eta^k(t)}^{\eta^k(t+h)}a_{\min}\intd s&\le\int_t^{t+h}a_{\max}\intd s,\\
            a_{\min}|\eta^k(t+h)-\eta^k(t)|&\le a_{\max} h,\\
            |\eta^k(t+h)-\eta^k(t)|&\le \frac{a_{\max}}{a_{\min}}h,
        \end{align*}            
        hold, that is, $\eta^k$ is Lipschitz continuous with
           \[ \lip{\eta^k} \le\frac{a_{\max}}{a_{\min}},\]
        independently of $k$, so $\eta$ satisfies hypotheses \ref{delay-prop} and \ref{eta-iterates-Lip}, and $x$ solves an equation of the form \eqref{eq:x_unidir}. Hence, \cref{thm:V-decreasing,thm:V-dropping,thm:solutions-regularize,th:x_0} apply.
    
    \subsection{Implicit delay given by two values}
    Assume that $f$ is bounded and let
     $$L_0=\sup_{\substack{\xi\in \R^3\\ 0\leq i\leq N }}|f^i(\xi)|.$$
    Consider equation \eqref{eq:sdde} on a modified spate-space
    \[\CKL \coloneqq \{\varphi \in \CK : \varphi|_{[-r,0]} \text{ is $L_0$-Lipschitz continuous}\},\]
   and let $\tau$ be given by 
    \begin{enumerate}[label=(ID$_\arabic*$)]
        \item 	\label{eq:delay:implicit-tau}
		 $\tau(t,x_t)= R(x(t), x^0(t - \tau),t)$,
        \item \label{eq:delay:implicit-cond} 
        $R \colon \R^{N+1}\times\R\times\R \to (0, r]$  is  Lipschitz continuous in all three variables with Lipschitz constants $\lip{R_1}$, $\lip{R_2}$, and $\lip{R_3}$, respectively. Moreover,
        \[\lip R_3 < 1\quad\text{and}\quad\frac{\lip R_1+2\lip R_2}{1-\lip R_3} < \frac{1}{L_0}.\]
    \end{enumerate}

    We will shortly see that $\tau(t,\varphi)$ is well-defined. Just as in the case of threshold delays, and analogous to the scalar case, treated in \cite{bartha:garab:krisztin:25}, additional assumptions can be given to establish the existence and uniqueness of solutions and the existence of a pullback (resp.\ global) attractor. 
    
    Such SDDEs have been studied, for example, in \cite{krisztin:arino:01,walther:09,bartha:krisztin:18,kennedy:19}.
	\begin{proposition}
		\label{prop:delay:twov:welldefined-lip}
		For any $\varphi \in \CKL$ and $t\in \R$, $\tau(t,\varphi)$ is 
		well-defined by \emph{\ref{eq:delay:implicit-tau}--\ref{eq:delay:implicit-cond}}, moreover, it is strictly positive and Lipschitz continuous in both variables. 
	\end{proposition}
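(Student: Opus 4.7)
The plan is to treat \ref{eq:delay:implicit-tau} as a fixed-point problem in the unknown $\tau$, separately for each pair $(t,\varphi) \in \R \times \CKL$. Using the segment convention, identify $x(t)$ with $(\varphi(0),\varphi(1),\dots,\varphi(N)) \in \R^{N+1}$ and $x^0(t-\tau)$ with $\varphi(-\tau)$, and define
\[
\Phi_{t,\varphi}\colon [0,r] \to (0,r], \qquad \Phi_{t,\varphi}(\tau) = R\bigl((\varphi(0),\varphi(1),\dots,\varphi(N)),\, \varphi(-\tau),\, t\bigr).
\]
Since $\varphi|_{[-r,0]}$ is $L_0$-Lipschitz and $R$ is $\lip R_2$-Lipschitz in its second argument,
\[
|\Phi_{t,\varphi}(\tau_1) - \Phi_{t,\varphi}(\tau_2)| \le \lip R_2 \cdot L_0 \cdot |\tau_1 - \tau_2|.
\]
From \ref{eq:delay:implicit-cond}, since $\lip R_1,\lip R_3 \ge 0$ and $\lip R_3 < 1$, one has $2\lip R_2 \le \lip R_1 + 2\lip R_2 < (1-\lip R_3)/L_0 \le 1/L_0$, so $\lip R_2 L_0 < 1/2 < 1$. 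Hence $\Phi_{t,\varphi}$ is a contraction on the complete metric space $[0,r]$, and the Banach fixed-point theorem delivers a unique $\tau^* \in [0,r]$ with $\Phi_{t,\varphi}(\tau^*) = \tau^*$. Because $\Phi_{t,\varphi}$ takes values in $(0,r]$, automatically $\tau^* \in (0,r]$, which establishes well-definedness and strict positivity. We set $\tau(t,\varphi) \coloneq \tau^*$.

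For the Lipschitz continuity in both arguments, take $(t_1,\varphi_1), (t_2,\varphi_2) \in \R \times \CKL$ and let $\tau_i = \tau(t_i,\varphi_i)$. Subtracting the defining equations and using the Lipschitz property of $R$ in each of its three slots gives
\[
|\tau_1 - \tau_2| \le \lip R_1 \|\varphi_1 - \varphi_2\| + \lip R_2\,|\varphi_1(-\tau_1) - \varphi_2(-\tau_2)| + \lip R_3 |t_1 - t_2|,
\]
where we used that $\|(\varphi_1(0),\dots,\varphi_1(N)) - (\varphi_2(0),\dots,\varphi_2(N))\|_{\R^{N+1}} \le \|\varphi_1-\varphi_2\|$. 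A triangle-inequality split together with $\varphi_1 \in \CKL$ yields
\[
|\varphi_1(-\tau_1) - \varphi_2(-\tau_2)| \le |\varphi_1(-\tau_1) - \varphi_1(-\tau_2)| + |\varphi_1(-\tau_2) - \varphi_2(-\tau_2)| \le L_0 |\tau_1-\tau_2| + \|\varphi_1-\varphi_2\|.
\]
Substituting back and collecting the $|\tau_1-\tau_2|$ terms produces
\[
(1 - \lip R_2 L_0)\,|\tau_1 - \tau_2| \le (\lip R_1 + \lip R_2)\,\|\varphi_1 - \varphi_2\| + \lip R_3\, |t_1 - t_2|,
\]
and since $1 - \lip R_2 L_0 > 1/2 > 0$, dividing through provides explicit Lipschitz constants in $\varphi$ and in $t$, completing the proof.

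There is no real obstacle here; the argument is a short contraction-mapping exercise. The only place where care is required is matching notational conventions — in particular, that the $L_0$-Lipschitz condition in $\CKL$ concerns only the restriction to $[-r,0]$, which is exactly the domain into which the argument $-\tau \in [-r,0)$ lands — and noticing that the stated inequality on $R$'s Lipschitz constants comfortably implies the weaker bound $\lip R_2 L_0 < 1$ that drives both the fixed-point step and the final division.
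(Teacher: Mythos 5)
Your proposal is correct and follows essentially the same route as the paper: the map you call $\Phi_{t,\varphi}$ is exactly the paper's $\sigma(t,\varphi)$, the contraction constant $\lip{R_2}L_0<1$ is extracted from \ref{eq:delay:implicit-cond} in the same way (you just make the bound $<1/2$ explicit), and the Lipschitz estimate via the four-term triangle-inequality split and division by $1-\lip{R_2}L_0$ is identical. No gaps.
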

	
	\begin{proof}
		For $\varphi \in \CKL$, $t\in\R$ and $s \in [0, r]$, 
		define
		$$\sigma(t,\varphi)(s) = R(\varphi(0),\dots,\varphi(N), \varphi(-s),t).$$
		Then the estimate
		\begin{equation*}
			\begin{split}
				\MoveEqLeft \left| \sigma(t,\varphi)(s_1) - \sigma(t,\varphi)(s_2) \right|\\
				&=
				\left| R(\varphi(0),\dots,\varphi(N), \varphi(-s_1),t) - R(\varphi(0),\dots,\varphi(N), \varphi(-s_2),t) \right| \\
				&\leq \lip{R_2} |\varphi(-s_1) - \varphi(-s_2)| \leq \lip{R_2} L_0 |s_1 - s_2|
			\end{split}
		\end{equation*}
		holds for all $s_1,s_2\in [0,r]$. Hence, by \ref{eq:delay:implicit-cond}, the map $\sigma(t,\varphi) \colon [0, r] \to [0, r]$ is a contraction for all $\varphi \in \CKL$ and $t\in\R$, and it has a unique fixed point denoted by $\tau(t,\varphi)$. Therefore, $\tau(t,\varphi)$ is well-defined and positive. 
		Let $t,s\in\R$, $\varphi, \psi \in \CKL$. Then, by the triangle inequality and the Lipschitz continuity of $R$,
		\begin{equation*}
			\begin{split}
				\MoveEqLeft[3]\left| \tau(t,\varphi) - \tau(s,\psi) \right|\\
				\leq{}&| R(\varphi(0),\dots,\varphi(N), \varphi(- \tau(t,\varphi)),t) - R(\psi(0),\dots,\psi(N), \psi(- \tau(s,\psi)),s) | \\
				\leq {} &| R(\varphi(0),\dots,\varphi(N), \varphi(- \tau(t,\varphi)),t) - 
				R(\varphi(0),\dots,\varphi(N), \varphi(- \tau(t,\varphi)),s) |  \\
				&{}+| R(\varphi(0),\dots,\varphi(N), \varphi(- \tau(t,\varphi)),s) - 
				R(\psi(0),\dots,\psi(N), \varphi(- \tau(t,\varphi)),s) |  \\
				&{}+| R(\psi(0),\dots,\psi(N), \varphi(- \tau(t,\varphi)),s) - R(\psi(0),\dots,\psi(N), \varphi(- \tau(s,\psi)),s) | \\ 
				&{}+| R(\psi(0),\dots,\psi(N), \varphi(- \tau(s,\psi)),s) - R(\psi(0),\dots,\psi(N), \psi(- \tau(s,\psi)),s) | \\
				\leq {} &\lip{R_3}|t-s|+\lip{R_1} \|\varphi - \psi\|\\
				&{}+\lip{R_2} | \varphi(- \tau(t,\varphi)) - \varphi(- \tau(s,\psi)) |  +\lip{R_2} \|\varphi - \psi\| \\
				\leq {} &\lip{R_3}|t-s|+\left( \lip{R_1} + \lip{R_2} \right) \|\varphi - \psi\|+\lip{R_2} L_0 | \tau(t,\varphi) - \tau(s,\psi) |
			\end{split}
		\end{equation*}
		follows.
		
		Thus, using \ref{eq:delay:implicit-cond} again, we obtain that 
		$\tau$ is Lipschitz continuous on $\R\times\CKL$ with 
		\begin{equation*}
			\lip{\tau_1} \leq \frac{\lip{R_3}}{1 - \lip{R_2} L_0},\qquad \lip{\tau_2} \leq \frac{\lip{R_1} + \lip{R_2}}{1 - \lip{R_2} L_0}. \qedhere
		\end{equation*}
	\end{proof}
	
	Next we prove that for any fixed solution $x$ of \eqref{eq:sdde}, \ref{eq:delay:implicit-tau}--\ref{eq:delay:implicit-cond},  $\eta( t)=t-\tau(t,x_t)$ is strictly increasing in $t$.
	
	\begin{proposition}
		\label{prop:delay:twov:eta}
		Suppose that hypotheses \emph{\ref{f-cont}--\ref{f-feedback}} hold and let $x$ be a solution of \eqref{eq:sdde}, \emph{\ref{eq:delay:implicit-tau}--\ref{eq:delay:implicit-cond}} on an interval $I$. Then $\eta(t)\coloneqq  t-\tau(t,x_t)$ is strictly increasing on $I$.
	\end{proposition}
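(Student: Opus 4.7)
The plan is to prove strict monotonicity of $\eta$ by showing that $g(t)\coloneqq \tau(t, x_t)$ is Lipschitz continuous in $t$ with Lipschitz constant strictly less than $1$. Since $\eta(t) = t - g(t)$, this immediately yields $\eta(t_2) - \eta(t_1) \geq (1 - \lip g)(t_2 - t_1) > 0$ for any $t_1 < t_2$ in $I$.

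To implement this, I would first invoke the Lipschitz estimates for $\tau$ already obtained at the end of the proof of \cref{prop:delay:twov:welldefined-lip}, namely
\[
\lip{\tau_1} \leq \frac{\lip R_3}{1 - \lip R_2 L_0}, \qquad \lip{\tau_2} \leq \frac{\lip R_1 + \lip R_2}{1 - \lip R_2 L_0}.
\]
Next, since $|f^i| \leq L_0$ by the definition of $L_0$ in this subsection, each coordinate $x^i$ of the solution is $L_0$-Lipschitz on $I$, and by the segment definition \eqref{eq:segments} this bound transfers coordinate-wise to the supremum norm: $\|x_t - x_s\| \leq L_0 |t - s|$ for all $t, s \in I$. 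Composing these estimates gives
\[
|g(t) - g(s)| \leq \lip{\tau_1}|t-s| + \lip{\tau_2} \|x_t - x_s\| \leq \frac{\lip R_3 + (\lip R_1 + \lip R_2) L_0}{1 - \lip R_2 L_0} |t - s|.
\]

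The final step is the algebraic verification that the coefficient above is strictly less than $1$. Clearing the positive denominator, this reduces to $\lip R_3 + (\lip R_1 + 2\lip R_2) L_0 < 1$, which (using $\lip R_3 < 1$) is precisely a rearrangement of the second inequality in hypothesis \ref{eq:delay:implicit-cond}. I expect the only subtlety to be recognizing how the factor $2\lip R_2$ arises on the way: one copy of $\lip R_2$ enters via $\lip{\tau_2}$ in the numerator, while a second copy enters through the denominator $1 - \lip R_2 L_0$. Once this identification is made, $\lip g < 1$ follows and the proof is complete.
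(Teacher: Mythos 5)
Your proposal is correct and follows essentially the same route as the paper: bound $|\tau(t,x_t)-\tau(s,x_s)|$ by $\lip{\tau_1}|t-s|+\lip{\tau_2}\|x_t-x_s\|$ using \cref{prop:delay:twov:welldefined-lip}, use the $L_0$-Lipschitz bound $\|x_t-x_s\|\le L_0|t-s|$, and check via \ref{eq:delay:implicit-cond} that the resulting coefficient $\frac{\lip{R_3}+(\lip{R_1}+\lip{R_2})L_0}{1-\lip{R_2}L_0}$ is strictly less than $1$, so that $\eta(t)-\eta(s)=(t-s)-(\tau(t,x_t)-\tau(s,x_s))>0$. Your algebraic identification of where the factor $2\lip{R_2}$ comes from matches the paper's computation exactly.
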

	
	\begin{proof}
    		Let $t, s \in I$ such that $t > s$. From  \ref{eq:delay:implicit-cond} we readily obtain
            \[\frac{\lip{R_3}+(\lip{R_1} + \lip{R_2})L_0}{1 - \lip{R_2} L_0}< 1,\]
            from which
		\begin{align*}
			\left| \tau(t,x_t) - \tau(s,x_s) \right| &\leq \lip{\tau_1}|t-s|+\lip{\tau_2} \|x_t - x_s\|\\
            &\leq 
			\frac{\lip{R_3}+(\lip{R_1} + \lip{R_2})L_0}{1 - \lip{R_2} L_0}|t - s|<|t-s|
		\end{align*}
            follows. Hence
		\begin{equation*}
			\eta( t) - \eta( s) = (t - \tau(t,x_t)) - (s - \tau(s,x_s)) = (t - s) - ( \tau(t,x_t) - \tau(s,x_s) ) > 0. \qedhere
		\end{equation*}
	\end{proof}

    In view of \cref{prop:delay:twov:eta,prop:delay:twov:welldefined-lip}, the results of \cref{sec:V-basic-prop} apply to equation \eqref{eq:sdde}, \ref{eq:delay:implicit-tau}--\ref{eq:delay:implicit-cond}.

    As for finiteness results on $V$, first note that \ref{eta-iterates-Lip} may be hard to guarantee, in general. Nevertheless, interestingly, one can still prove using a similar argument as seen at \cref{th:x_0} that the statements of the theorem remain true for solutions of \eqref{eq:sdde}, \ref{eq:delay:implicit-tau}--\ref{eq:delay:implicit-cond}, as well, at least when $R$ does not explicitly depend on $t$. This is formulated in the next proposition.
    
	\begin{proposition}\label{th:x_0_r}
		Assume that hypotheses \emph{\ref{f-cont}--\ref{f-feedback}}  and \emph{\ref{f-lin-bounded}} hold and that $R$ does not depend on its third argument. Let $x$ be a backwards-bounded entire solution of \eqref{eq:sdde}, \emph{\ref{eq:delay:implicit-tau}--\ref{eq:delay:implicit-cond}},  $\tau(t)\coloneqq \tau(t,x_t)$, and  $\eta(t)\coloneqq t-\tau(t)$. Then the following statements hold.
        \begin{enumerate}[label=(\roman*)]
        \item If there exists $\sigma\in\R$ such that $x(\eta^k(\sigma))=0\in \R^{N+1}$ for all $k\in\N_0$, then $x(t)=0$ for all $t\in\R$. 
        \item If $x$ is nontrivial, then $V(x_t,-\tau(t))$ is finite for $t\in \R$.
        \end{enumerate}
	\end{proposition}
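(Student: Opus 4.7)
The strategy is to adapt the proof of \cref{th:x_0} to the implicit-delay SDDE setting, handling the absence of hypothesis \ref{eta-iterates-Lip} by exploiting the $t$-independence of $R$.

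\emph{Preliminary observation.} Since $x(\eta^k(\sigma))=0\in\R^{N+1}$ for all $k\ge 0$ and $R$ does not depend on $t$, the delay at every node satisfies $\tau(x_{\eta^k(\sigma)})=R(0,\dots,0)=: r_0>0$ (uniquely determined via the fixed-point equation \ref{eq:delay:implicit-tau}, since $x^0(\eta^{k+1}(\sigma))=0$). Hence the nodes are \emph{equidistant}: $\eta^k(\sigma)=\sigma-kr_0$ for all $k\ge 0$. Moreover, the feedback condition \ref{f-feedback} combined with continuity yields $f^i(t,0,0)=0$, so $\dot x(\eta^k(\sigma))=0$ for every $k$ as well.

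\emph{Part (i).} I would define $z_k(t):=x(\sigma-kr_0+t)$ for $t\in[0,r_0]$ and $k\in\N_0$, so that each $z_k$ vanishes at both endpoints of $[0,r_0]$. From \eqref{eq:sdde} and \ref{f-lin-bounded}, one obtains the pointwise estimates $|\dot z_k^i(t)|\le C(|z_k^i(t)|+|z_k^{i+1}(t)|)$ for $i<N$, and $|\dot z_k^N(t)|\le C(|z_k^N(t)|+|x^0(\eta(\sigma-kr_0+t))|)$, where the forcing $|x^0\circ\eta|$ is controlled by $M_{k+1}:=\sup_{[0,r_0]}\|z_{k+1}\|_\infty$. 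Applying Gronwall's inequality from both endpoints (using $z_k(0)=z_k(r_0)=0$) and taking the sharper of the two bounds at each point, one arrives at a recurrence of the form $M_k\le\alpha M_{k+1}$ with an explicit constant $\alpha$.

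\emph{Main obstacle.} Establishing $\alpha<1$. A single-step Gronwall yields $\alpha=(e^{Cr_0}-1)/2$, which need not be strictly less than 1. The sharpening will exploit the cyclic structure by chaining the Gronwall estimates through the cascade $|x^0|\to|x^1|\to\dots\to|x^N|\to|x^0\circ\eta|$: iterating the inequality $|x^i(t)|\le C\int_a^t |x^{i+1}(s)|\,e^{C(t-s)}\intd s$ recursively produces a Taylor-remainder-type coefficient of order $(Cr_0)^{N+1}/(N+1)!$ times $e^{Cr_0}$ for $|x^0|$, and similar smaller coefficients for $|x^1|,\dots,|x^{N-1}|$; combining this cascade with the two-endpoint vanishing and the structural bound $(\lip R_1+2\lip R_2)L_0<1$ from \ref{eq:delay:implicit-cond} yields $\alpha<1$. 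Once secured, the uniform bound $M_k\le M$ (from backwards-boundedness) together with iteration gives $M_0\le\alpha^j M_j\le\alpha^j M\to 0$ as $j\to\infty$, so $M_0=0$. This means $x\equiv 0$ on $[\sigma-r_0,\sigma]$; applying the argument at each node $\sigma-kr_0$ propagates this to $(-\infty,\sigma]$, and forward uniqueness of the IVP extends it to all of $\R$.

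\emph{Part (ii).} This reduces to part (i) exactly as in \cref{th:x_0}\,\ref{statement:V-finite}: if $V(x_t,-\tau(t))=\infty$ for some $t\in\R$, an IOP $\sigma\in[\eta(t),t]$ arises by a standard accumulation argument. The proof of \cref{le:iop} relies only on the continuity and strict monotonicity of $\eta$, which are secured in the present setting by \cref{prop:delay:twov:welldefined-lip} and \cref{prop:delay:twov:eta}, respectively; hence it carries over verbatim to yield $x(\eta^k(\sigma))=0$ for all $k\in\N_0$. Part (i) then forces $x\equiv 0$, contradicting the assumed nontriviality.
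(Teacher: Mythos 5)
Your part (ii) and your overall reduction are in line with the paper, and your preliminary observation that the nodes $\eta^k(\sigma)$ are equidistant (spacing $r_0=R(0,\dots,0)$, since $R$ is $t$-independent and $x$ vanishes at every node) is correct, though the paper does not need it. The problem is in part (i): your entire argument hinges on a contraction $M_k\le\alpha M_{k+1}$ with $\alpha<1$, and you do not — and cannot — establish $\alpha<1$ from the hypotheses. The constant $C$ from \ref{f-lin-bounded}, the spacing $r_0$, and the length $N$ of the cycle are completely unrelated to one another and to the structural bound $(\lip R_1+2\lip R_2)L_0<1$ in \ref{eq:delay:implicit-cond}; the latter controls the delay's dependence on the state, not the size of $Cr_0$. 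Your proposed cascade coefficient $(Cr_0)^{N+1}e^{Cr_0}/(N+1)!$ is unbounded in $Cr_0$, so the claimed sharpening fails in general, and with $\alpha\ge1$ the iteration $M_0\le\alpha^jM_j$ gives nothing. A secondary looseness: the forcing term $x^0(\eta(\eta^k(\sigma)+t))$ is \emph{not} simply controlled by $M_{k+1}$, because $\tau(\eta^k(\sigma)+t)$ need not equal $r_0$ away from the nodes; one must compare $x^0(\eta(\eta^k(\sigma)+t))$ with $x^0(\eta^{k+1}(\sigma)+t)$ via the $L_0$-Lipschitz continuity of $x^0$ and the Lipschitz constants of $R$, absorbing the resulting self-referential term using $\lip R_2\,L_0<1$ — this is exactly the computation the paper carries out to produce the constant $C_1$.

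The paper's proof avoids the need for any contraction altogether: after obtaining $|\dot z_k(t)|\le C_1(|z_k(t)|+|z_{k+1}(t)|)$ with $z_k(0)=0$, it assembles \emph{all} the segments into a single function $Z(t)=(z_k(t))_{k=0}^\infty$ with values in $\ell^\infty$ (bounded by backwards-boundedness of $x$), integrates to get
\begin{equation*}
\|Z(t)\|_\infty\le 2C_2\int_0^t\|Z(s)\|_\infty\intd s,\qquad t\in[0,r],
\end{equation*}
and applies the Gronwall--Bellman lemma to this scalar inequality with zero initial value. This forces $Z\equiv0$ on $[0,r]$ regardless of how large $C_2r$ is, and then \cref{th:x_t}\,\ref{0-is-invariant} propagates $x\equiv0$ to all of $\R$. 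You should replace your contraction scheme with this $\ell^\infty$-Gronwall argument; the rest of your outline then goes through.
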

	
	\begin{proof}
		(i) Define $z_k^i(t)=x(\eta^k(\sigma)+t)$ for $t\in [0,r]$ and $$M\coloneqq \sup\{|x^i(t)| : t \in (-\infty,r],\ 0\leq i\leq N\}.$$ Observe the different definition of $z_k$ compared to that in the proof of \cref{th:x_0}. Note that $z_k(0)=0$ for all $k\in \N_0$. By virtue of equation \eqref{eq:sdde}, hypothesis \ref{f-lin-bounded} and the backward boundedness of $x$ we have
		\begin{align}
			\left|\dot z_k^i(t)\right|&=\left|\dot x^i(\eta^k(\sigma)+t)\right|=\left|f^i(t,x^i(\eta^k(\sigma)+t),x^{i+1}(\eta^k(\sigma)+t))\right|\nonumber\\
			&=\left|f^i(t,z_k^i(t),z_k^{i+1}(t))\right|\le C\left(\left|z_k^i(t)\right|+\left|z_k^{i+1}(t)\right|\right)\nonumber
			\intertext{for $i=0,\dots,N-1$, and}
			\left|\dot z_k^N(t)\right|&=\left|\dot x^N(\eta^k(\sigma)+t)\right|=\left|f^N(t,x^N(\eta^k(\sigma)+t),x^0(\eta(\eta^k(\sigma)+t)))\right|\nonumber\\
			&=\left|f^N(t,z_k^N(t),x^0(\eta(\eta^k(\sigma)+t)))\right|\le C\left(\left|z_k^N(t)\right|+\left|x^0(\eta(\eta^k(\sigma)+t))\right|\right).\label{eq:z_bound}
		\end{align}
		We can estimate the last term of \eqref{eq:z_bound} in the following way:
		\begin{equation*}
			\begin{split}
				|x^0(\eta(\eta^k(\sigma)+t))|&\leq   \bigl|x^0(\eta(\eta^k(\sigma)+t)) - x^0(\eta^{k+1}(\sigma) + t)\bigr|  + |x^0(\eta^{k+1}(\sigma) + t)\bigr|\\
				&\leq  L_0 \bigl|t + \eta^k(\sigma) - \tau(\eta^k(\sigma)+t) - t - \eta^k(\sigma) + \tau(\eta^k(\sigma))\bigr| + |z_{k+1}^0(t)| \\
				&=   L_0 |\tau(\eta^k(\sigma)) - \tau(\eta^k(\sigma)+t)| + |z^0_{k+1}(t)| .
			\end{split}
		\end{equation*}
		From the equations
		\begin{align*}
			\tau(\eta^k(\sigma))&=R\bigl(x(\eta^k(\sigma)),x^0(\eta^{k+1}(\sigma))\bigr)\\
			\shortintertext{and}
			\tau(\eta^k(\sigma)+t)&=R\bigl(x(\eta^k(\sigma)+t),x^0(\eta(\eta^k(\sigma)+t))\bigr),
		\end{align*}
		we conclude that
		\begin{equation*}
			\begin{aligned}
				|x^0(\eta(\eta^k(\sigma)+t)) | 
				&\leq  L_0 \lip{R_1} \bigl|x^0(\eta^{k}(\sigma) + t) - x^0(\eta^k(\sigma))\bigr| \\
				&\mathrel{\hphantom{=}}{}+  L_0 \lip{R_2} \bigl|x^0(\eta(\eta^k(\sigma)+t)) - x^0(\eta^{k+1}(\sigma))\bigr| + |z^0_{k+1}(t)| \\
				&=  L_0 \lip{R_1}|z^0_k(t)| +  L_0 \lip{R_2} |x^0(\eta(\eta^k(\sigma)+t))| + |z^0_{k+1}(t)|,
			\end{aligned}
		\end{equation*}
		or equivalently,
		\[
		|x^0(\eta(\eta^k(\sigma)+t))| \leq \frac{\lip{R_1}L_0 }{1 - \lip{R_2} L_0}|z^0_k(t)| + \frac{1}{1 - \lip{R_2} L_0}|z^0_{k+1}(t)|.
		\]
		Substituting this into \eqref{eq:z_bound} yields
		\begin{equation*}
			|\dot{z}^N_k(t)| \leq \left(C  + \frac{C L_0 \lip{R_1}}{1 - \lip{R_2} L_0}\right)|z_k(t)| + \frac{C}{1 - \lip{R_2} L_0} |z_{k+1}(t)|.
		\end{equation*}
		By setting \[C_1 \coloneqq C + \frac{C L_0 \lip{R_1}}{1-\lip{R_2} L_0} + \frac{C}{1-\lip{R_2} L_0}, \] we obtain that inequality
		\[|\dot{z}_k(t)| \leq C_1( |z_k(t)| +  |z_{k+1}(t)|)\]
		holds for all $t\in \R $ and $k\in \N_0$.
		\medskip
		
		From this point we can continue with a similar argument as in the proof of  \cref{th:x_0}. Recall that $z^k(0)=0$. Thus, setting  $C_2\coloneqq 2 M(1+C_1)$, we obtain that
		$|z_k(t)| \leq C_2$ and  $|\dot{z}_k(t)| \leq C_2$ hold for all $t \in \R $ and $k \in \N$. Hence
		\begin{equation}
			\label{lipsch2}
			|z_k(t)| \leq C_2 \int^t_0\left(|z_k(s)| + |z_{k+1}(s)|\right)\intd s\quad  \text{for all } t \in [0,r].
		\end{equation}
		
		Let $Z(t)=(z_k(t))_{k=0}^\infty \in l^\infty$. The uniform boundedness of the components and their derivatives give the continuity of $Z$. From  inequality \eqref{lipsch2} we conclude that
		\begin{equation}
			\label{lipsch3}
			\|Z(t)\|_\infty \leq 2 C_2\int^t_0 {\|Z(s)\|_\infty \, \intd s}
		\end{equation}
		holds for all $t \in [0,r]$. 
		
		Applying the Gronwall--Bellman lemma yields that $\|Z(t)\|_\infty \equiv 0$, that is, $z_k(t) = 0$ for all $k \in \N$ and $ t\in [0,r]$, which together with \cref{th:x_t}\,\ref{0-is-invariant} yield $x(t)=0$ for all $t\in \R$.
    \medskip

    (ii) In view of statement (i), the proof of \cref{th:x_0}\,\ref{statement:V-finite} holds here.
    \end{proof}

\section{Concluding remarks}
In this work we generalized the results of Mallet-Paret and Sell \cite{mallet-paret:sell:96-lyapunov} on the existence and properties of a discrete Lyapunov function for unidirectional cyclic systems for the case of time-variable or state-dependent delay. We conclude this paper by discussing  possibilities to further generalize the results and  directions of future works.

\subsubsection*{Possible generalizations} 
It is worth noticing that equation \eqref{eq:x_bidir} was more general in two ways -- apart from allowing only constant delays, of course. On the one hand, it also admitted a bidirectional structure, moreover, more delays were also allowed -- one delay in each equation, i.e.\ $x^{i}(t-\tau_{i})$ in the $(i-1)$-th equation, in place of $x^i(t)$. The constant delay and cyclic structure allowed one to transform the equation into the normalized form \eqref{eq:x_bidir}. We conjecture that our results can be further generalized to the case when the delays $\tau_i$ depend on time, i.e. to systems of the form
\begin{equation}
\begin{alignedat}{3}
    \dot x^0(t)&=f^0(t,x^0(t),x^1(t-\tau_1(t))),&\qquad &\\
    \dot x^{i}(t)&=f^{i}(t,x^{i-1}(t),x^i(t),x^{i+1}(t-\tau_{i+1}(t))),&\qquad &i=1,\dots,N,
\end{alignedat}\label{eq:x_bidir-time-variable}
\end{equation}
perhaps at the cost of additional assumptions on the delays. We see two possible approaches to tackle this problem. 1) If the functions $\tau_i$ are continuously differentiable, then a time transformation similar to the one introduced by Cao \cite{cao:92} in the scalar case could be combined with the one applied in \cite{mallet-paret:sell:96-lyapunov} to obtain a system of equations with a single constant delay and of the normalized form \eqref{eq:x_bidir}. Then the results of the latter paper could be applied directly to the transformed system. This approach also has its limitations. For example, if one has a system of SDDEs with multiple delays, then each solution of the SDDE system will be transformed into a different normalized equation, so it will be very hard to get any useful information on solutions of the former equation from results obtained for the latter. The differentiability of delays $\tau_i(x_t)$ with respect to $t$ could also be violated in certain SDDEs. 2) A natural way to avoid both the extra condition on smoothness of the delays and the issue of losing information during the transformation could be to develop a (time-dependent) discrete Lyapunov function directly for the system \eqref{eq:x_bidir-time-variable}, on the phase space of $C([-r,0],\R^{N+1})$. However, such a theory is not yet available, even in the case of a constant delay. 

We believe that the assumption on the continuity of $\eta$ is not essential for \cref{thm:V-decreasing} and could be replaced by piecewise continuity with some extra effort in the proof. This could be relevant in models where the feedback is based on observed data on the state, which is not continuously monitored, but is measured, for instance, $T$-periodically. Then we could end up with something like $\eta(t)=\lfloor t/T\rfloor T-\tau_0$, where $\lfloor{}\cdot{}\rfloor$ denotes the floor function and $\tau_0>0$ reflects an extra reaction lag.

\subsubsection*{Potential applications}
The theory developed in this work can potentially have applications in various topics, such as proving the existence of slowly (or rapid) oscillatory periodic solutions, studying the geometric structure of the unstable manifold of the origin, or developing a Poincaré--Bendixson-type theory for certain cyclic SDDE systems. Our next goal is to prove the existence of a Morse decomposition of the global attractor for autonomous cyclic SDDE systems with delays treated in \cref{sec:SDDEs}. A Morse decomposition means, roughly speaking, that the dynamics is gradient-like on the global attractor. Such a result has been established very recently by Bartha, Krisztin and one of the authors for scalar equations and negative feedback \cite{bartha:garab:krisztin:25}. We hope to extend these results to the case of $N>0$ by combining our current findings with the ideas and techniques applied in \cite{bartha:garab:krisztin:25} and \cite{garab:20}.

\section*{Acknowledgement}
This research was supported by the National Research, Development and Innovation (NRDI) Fund, Hungary, [project no.\ TKP2021-NVA-09 and FK~142891], and by the National Laboratory for Health Security [RRF-2.3.1-21-2022-00006].
Á.~G.\ was supported by the János Bolyai Research Scholarship of the Hungarian Academy of Sciences.

\printbibliography

\end{document}